\newcommand{\vertiii}[1]{{\left\vert\kern-0.25ex\left\vert\kern-0.25ex\left\vert #1
    \right\vert\kern-0.25ex\right\vert\kern-0.25ex\right\vert}}
\theoremstyle{plain}
\subjclass{}%
\keywords{}%
\date{\today}%
\dedicatory{}%
\title{Values of random polynomials in shrinking targets}
\author{Dubi Kelmer}
\address{Department of Mathematics, Boston College, Chestnut Hill MA 02467-3806, USA}
\email{kelmer@bc.edu}
\author{Shucheng Yu}
\address{Department of Mathematics, Technion, Haifa, Israel}
\email{yushucheng@campus.technion.ac.il}
\thanks{The authors were partially supported by NSF CAREER grant DMS-1651563. The second author was supported by ERC grant HD-APP}
\begin{document}
\begin{abstract}
Relying on the classical second moment formula of Rogers we give an effective asymptotic formula for the number of integer vectors $v$ in a ball of radius $t$, 
with value $Q(v)$ in a shrinking interval of size  $t^{-\kappa}$, that is valid for almost all indefinite quadratic forms in $n$ variables for any $\kappa<n-2$. This implies in particular, the existence of such integer solutions establishing the prediction made by Ghosh Gorodnik and Nevo \cite{GhoshGorodnikNevo2018}. We also obtain similar results for random polynomials of higher degree.
\end{abstract}
\maketitle

\section{Introduction}
Let $Q$ be a non-degenerate indefinite quadratic form in $n\geq 3$ variables. We say $Q$ is \textit{irrational} if $Q$ is not a multiple of a quadratic form with rational coefficients. The Oppenheim Conjecture, proved by Margulis \cite{Margulis1987}, states that if $Q$ is irrational, then $\overline{Q(\Z^n)}=\R$.
Going beyond this one can ask about an effective rate for the density, that is, given $\xi\in \R$ and a large parameter $t>0$, one would like to establish how small can 
$|Q(v)-\xi |$ be, for $v\in \Z^n$ with $\|v\|\leq t$ bounded. This type of problem has a long history \cite{BirchDavenport1958, BentkusGotze1999, GotzeMargulis2010}, and we refer to \cite{Margulisfields1997} for an extensive review. As an example we note that for $n\geq 5$ it was shown in \cite{GotzeMargulis2010} , that under a suitable diophantine condition on the coefficients of $Q$ there is $\kappa>0$ such that the inequality
$$|Q(v)-\xi|<\|v\|^{-\kappa}$$
has infinitely many integer solutions (when $\xi=0$ this holds for all forms). For ternary forms the best bounds are due to Lindenstrauss and Margulis \cite{LindenstraussMargulis2014} who showed that under suitable diophantine conditions on $Q$ the inequality $|Q(v)-\xi|\leq \log(\|v\|)^{-\kappa}$ has infinitely many integer solutions. 

Improving on these bounds for any given form $Q$ seems like a very difficult problem, nevertheless, much more can be said when considering a generic form.
In \cite{GhoshGorodnikNevo2018}, Ghosh Gorodnik and Nevo considered the problem of values of generic polynomials and gave a heuristic argument based on the pigeon hole principal predicting that for a generic degree $d$ polynomial  $F$ in $n$ variables, one should expect that the system of inequalities
\begin{equation} \label{e:main} |F(v)-\xi|<t^{-\kappa},\; \|v\|\leq t\end{equation}
would have integer solutions for any positive $\kappa<n-d$, and in particular, for quadratic forms this should hold for any $\kappa<n-2$.

To make the notion of a generic form more precise, we denote by $Y_{p,q}$ the space of determinant one quadratic forms of signature $(p,q)$ and note that the natural action of $\SL_n(\R)$ on this space (via change of variables) is transitive, and hence the Haar measure  of $\SL_n(\R)$ gives a natural measure on $Y_{p,q}$.
In this setting, by  utilizing the fact that an indefinite quadratic form is stabilized by a large semisimple group, 
and studying a shrinking target problem for the action of this group, \cite{GhoshGorodnikNevo2018} showed that there is some $\kappa_0$ such that for all $\kappa<\kappa_0$ for any $\xi\in \R$, for almost all forms $Q\in Y_{p,q}$ the inequality \eqref{e:main} has integer solutions for all sufficiently large $t$. While in general the value of $\kappa_0$ is smaller than $n-2$,  for $n=3$ they show that $\kappa_0=1$ in agreement with the heuristic prediction (see also \cite{GhoshKelmer17} for a similar result for $\xi=0$). For the special case where $\xi=0$, in \cite{AthreyaMargulis2018}  Athreya and Margulis used a completely different approach relying on lattice point counting, and showed that for any $n\geq 3$ and for any $\kappa<n-2$, for almost all $Q\in Y_{p,q}$ there are integer solutions to 
$|Q(v)|<t^{-\kappa}$ with $\|v\|\leq t$ for all sufficiently large $t$.

A different way to try and quantify the density of integer values of forms, is to study the asymptotics for the number of integer solutions $v\in \Z^n,\; \|v\|\leq t$ with
$Q(v)\in I$ for some fixed small interval $I$. Here Eskin, Margulis and Mozes \cite{EskinMargulisMozes1998} showed that for any irrational quadratic form $Q$ of signature $(p,q)$ with $p\geq 3$, $q\geq 1$, and any interval $I\subseteq \R$ the number of solutions is asymptotic to $c_Q |I|t^{n-2}$ with $c_Q$ an explicit constant depending on the form $Q$. This is no longer true for forms of signature $(2,2)$ or $(2,1)$ where one can find examples for which the number of solutions grows logarithmically faster than $c_Q|I|t^{n-2}$. Nevertheless, they showed that the same asymptotic holds for almost all quadratic forms of signature $(2,2)$ or $(2,1)$. 

As in the problem for the rate of density, for  this problem one can also expect more when considering a generic form. Indeed, \cite{AthreyaMargulis2018} improved the asymptotic formula to give an effective estimate with a power saving. Explicitly, they showed that there is $\nu>0$ such that for any fixed interval $I$ and for almost all $Q\in Y_{p,q}$, 
$$\#\{v\in \Z^n:  Q(v)\in I,\; \|v\| \leq t\}=c_Q|I|t^{n-2}+O_{Q,I}(t^{n-2-\nu}),$$
where here and below we use notation $A=O(B)$ to mean that $A\leq cB$ for some constant $c>0$, and we use the subscript to emphasize the dependance of this constant on additional parameters.

In this paper, we refine the result of \cite{AthreyaMargulis2018}, by considering the same problem when we allow the interval $I$ to shrink as $t$ grows. As mentioned in \cite{AthreyaMargulis2018}, this method is suitable to deal with polynomials of higher degree, and we illustrate this by considering the problem in this generality.  To do this, for $d\geq 2$ and $n=p+q$ let
$$F_0(v)=\sum_{i=1}^pv_i^d-\sum_{i=p+1}^n v_i^d,$$ 
and consider the space $Y_{p,q}^{(d)}$ of homogenous polynomials of degree $d$ that are of the form $g\cdot F_0$ with $g\cdot F_0(v)=F_0(vg)$ and $g\in \SL_n(\R)$.
The Haar measure of $\SL_n(\R)$ then gives a measure on $Y_{p,q}^{(d)}$, giving us a natural notion of almost all polynomials in this space.
We note that when $d=2$ the space $Y_{p,q}^{(2)}=Y_{p,q}$ is the full space of determinant one quadratic forms of signature $(p,q)$.  

\begin{thm}\label{thm:singleapprox}\label{t1}
For any $d\geq 2$ even and $n=p+q>d$ with $p,q\geq 1$, let $0<\kappa<n-d$. Let $\{I_t\}_{t> 0}$ be a decreasing family of bounded measurable subsets of $\R$ with measures $|I_t|=ct^{-\kappa}$ for some $c>0$. Then there is $\nu>0$ such that for almost all $F\in Y^{(d)}_{p,q}$ there is a constant $c_F>0$ such that 
$$\#\{v\in \Z^n: F(v)\in I_t,\; \|v\|\leq t\}=c_F |I_t|t^{n-d}+O_F(t^{n-d-\kappa-\nu}).$$

\end{thm}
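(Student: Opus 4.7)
The plan is to express the count as a Siegel transform applied to the random lattice $\Z^n g$ (where $F=g\cdot F_0$), compute the expected volume via a coarea calculation, bound the variance using Rogers' second moment formula, and then pass to an almost-sure power-saving bound via Chebyshev combined with Borel--Cantelli and a sandwiching argument. After the substitution $u=vg$, the count becomes
$$N_t(F)=\widehat{f_{t,g}}(\Z^n g),\qquad f_{t,g}(u):=\mathbf 1[F_0(u)\in I_t,\ \|ug^{-1}\|\le t],$$
where $\widehat{h}(\Lambda)=\sum_{v\in\Lambda\setminus\{0\}}h(v)$. Slicing the target $\Omega_t^{(g)}=\{u:f_{t,g}(u)=1\}$ by level sets of $F_0$ and rescaling via the homogeneity of $F_0$ of degree $d$ gives $\int f_{t,g}=c_F|I_t|t^{n-d}+O(|I_t|^2 t^{n-2d})$, with $c_F$ an explicit $F$-dependent surface integral of $\|\nabla F\|^{-1}$ on $F^{-1}(0)\cap B_1$; this identifies the main term.

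Since $n>d\ge 2$ one has $n\ge 3$, so Rogers' second moment formula applies and yields $\int|\widehat h|^2\,d\mu\ll(\int h)^2+\int h^2$; taking $h=f_{t,g}$ and using that $f_{t,g}$ is $\{0,1\}$-valued, the variance of $\widehat{f_{t,g}}$ over the random lattice is $\ll \int f_{t,g}\asymp t^{n-d-\kappa}$. Fix any $0<\nu<(n-d-\kappa)/2$ and take a sequence $t_k$ with $t_{k+1}=t_k(1+t_k^{-\alpha})$ for some small $\alpha>\nu$. Chebyshev's inequality then yields
$$\mu\bigl\{g:|N_{t_k}(F)-c_F|I_{t_k}|t_k^{n-d}|>t_k^{n-d-\kappa-\nu}\bigr\}\ll t_k^{2\nu-(n-d-\kappa)},$$
which is summable in $k$, so Borel--Cantelli gives the claimed asymptotic along $\{t_k\}$ for almost every $F$. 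For $t\in[t_k,t_{k+1}]$ the sandwich
$$\#\{v:F(v)\in I_{t_{k+1}},\|v\|\le t_k\}\le N_t(F)\le\#\{v:F(v)\in I_{t_k},\|v\|\le t_{k+1}\},$$
combined with the same Siegel--Rogers estimate for the two extremal mixed counts (whose main terms differ from $c_F|I_t|t^{n-d}$ by a factor $1+O(t_k^{-\alpha})$), transfers the estimate to all $t$ at the cost of a slightly reduced $\nu$.

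The principal obstacle I expect is the $g$-dependence of the target region $\Omega_t^{(g)}$: the classical Siegel--Rogers framework averages a \emph{fixed} test function over varying lattices, whereas here the test function itself depends on the very $g$ being randomized. The natural resolutions are either to absorb the ellipsoidal ball $\|ug^{-1}\|\le t$ into a $g$-equivariant norm on $\R^n$ so the target becomes $g$-independent (trading this $g$-dependence for the $g$-dependence already present in $c_F$), or to work within a compact subset of $\SL_n(\R)$ and invoke uniform-in-$g$ Rogers constants together with $\sigma$-compactness of the parameter space. A secondary technical point is verifying that the $(\int f)^2$-piece in Rogers' formula exactly cancels the squared mean, leaving only the favorable $\int f^2$-term in the variance; for indicators of thin targets around $F^{-1}(\xi)$ this reduces to checking that collinear contributions $\int f(kx)f(x)\,dx$ for $k\ge 2$ are of order $\ll t^{n-d-\kappa}$, which follows from the homogeneity of $F_0$.
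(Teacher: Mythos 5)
You correctly set up the translation to counting lattice points of the random lattice $\Z^n g$, correctly identify Rogers' second moment formula as the main tool, and correctly flag the key difficulty yourself: the test region $\Omega_t^{(g)} = \{u : F_0(u)\in I_t,\ \|ug^{-1}\|\le t\}$ depends on the very parameter $g$ that is being randomized. Unfortunately neither of your two proposed remedies closes this gap, and without it the Chebyshev/Borel--Cantelli step is not valid. Rogers' formula bounds $\int_{\cX}|\widehat f(\Lambda)-\int f|^2\,d\mu(\Lambda)$ for a \emph{fixed} $f$; it says nothing directly about $\int_{\cK}|\widehat{f_{t,g}}(\Z^n g)-\int f_{t,g}|^2\,d\mu(g)$ when $f_{t,g}$ varies with $g$. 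Absorbing $\|ug^{-1}\|\le t$ into a ``$g$-equivariant norm'' just renames the dependence, and ``uniform-in-$g$ Rogers constants'' is not the issue --- the constant in Rogers' bound is already absolute; the problem is the mismatch between the fixed-test-function hypothesis and the $g$-dependent family of test functions. Your time-sandwich $t\in[t_k,t_{k+1}]$ discretizes $t$ but not $g$: the ``extremal mixed counts'' $\#\{v:F(v)\in I_{t_{k\pm 1}},\|v\|\le t_{k\mp 1}\}$ are still lattice counts in $g$-dependent regions, so the same obstruction recurs there.

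The missing ingredient is precisely the paper's covering-plus-interpolation device (Lemma~\ref{prop:covering} together with Lemma~\ref{l:inter}, used in the proof of Theorem~\ref{thm:latticecounting}): fix a compact $\cK\subset G$, cover it by $O(\epsilon_k^{-d_n})$ translates $\cO_{\epsilon_k}h$ of a small norm ball, and for each representative $h$ build two \emph{fixed} sets $\underline{A}_{k,h}\subseteq A_{g,I_t,t}\subseteq\overline{A}_{k,h}$ valid simultaneously for all $g\in\cO_{\epsilon_k}h$ and all $t\in[t_k,t_{k+1})$; Lemma~\ref{l:inter} then transfers a discrepancy bound for these polynomially many fixed sets to a discrepancy bound for the $g$-dependent $A_{g,I_t,t}$. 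Only at that point can Lemma~\ref{l:MAT} (the Chebyshev consequence of Rogers) be applied, and a union bound over the $O(k^{d_n})$ covering centers is what forces the choice $t_k=k^\alpha$ with $\alpha\asymp d_n$ rather than a denser sequence. A secondary issue: you assert a volume expansion $\int f_{t,g}=c_F|I_t|t^{n-d}+O(|I_t|^2t^{n-2d})$ from a coarea slicing, but the error term needs real work --- the paper devotes Section~\ref{s:vol} to a smoothing argument producing the (weaker, but sufficient) bound $O(|I_t|\,t^{n-d-1}\log t)$, and a power-saving volume estimate uniform in $g$ over $\cK$ is essential input to the covering argument.
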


\begin{rem}
From our proof one can extract an explicit value for $\nu$, 
and in particular any $\nu<\frac{2(n-d-\kappa)}{n^2+n+4}$ will work. We did not try to obtain the optimal power saving here and our main point is that there is some positive power saving.
\end{rem}

This result is valid for any family of shrinking targets, in particular, taking the shrinking sets to be the intervals $I_t=(\xi-t^{-\kappa},\xi+t^{-\kappa})$ it implies the following corollary, verifying the prediction of \cite{GhoshGorodnikNevo2018}.
\begin{cor}
Let $n=p+q>d$ be as in Theorem \ref{thm:singleapprox}. For any $0<\kappa<n-d$ and for any $\xi\in \R$, for almost all $F\in Y^{(d)}_{p,q}$ the system of inequalities 
$$|F(v)-\xi|<t^{-\kappa},\quad \|v\|\leq t,$$
has integer solutions for all sufficiently large $t$.
\end{cor}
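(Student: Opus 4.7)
The plan is to deduce the corollary directly from Theorem \ref{thm:singleapprox} by specializing its shrinking target family to intervals centered at $\xi$. Fix $\xi \in \R$ and $0 < \kappa < n-d$, and set
\begin{equation*}
I_t := (\xi - t^{-\kappa},\; \xi + t^{-\kappa}), \qquad t > 0.
\end{equation*}
Since $\kappa > 0$, the family $\{I_t\}_{t > 0}$ is a decreasing family of bounded measurable subsets of $\R$ with $|I_t| = 2 t^{-\kappa}$, so the hypotheses of Theorem \ref{thm:singleapprox} are satisfied with $c = 2$.

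Theorem \ref{thm:singleapprox} then supplies some $\nu > 0$ and a full-measure set $\Omega = \Omega_{\xi,\kappa} \subseteq Y^{(d)}_{p,q}$ such that for every $F \in \Omega$ there is a constant $c_F > 0$ with
\begin{equation*}
\#\{v \in \Z^n : F(v) \in I_t,\; \|v\| \leq t\} = 2 c_F\, t^{n-d-\kappa} + O_F\bigl(t^{n-d-\kappa-\nu}\bigr).
\end{equation*}
Because $c_F > 0$ and $\nu > 0$, the main term strictly dominates the error as $t \to \infty$, so there exists $t_0 = t_0(F)$ such that the count above is at least one for every $t \geq t_0$. Each such $t$ therefore yields a vector $v \in \Z^n$ with $\|v\| \leq t$ and $F(v) \in I_t$, i.e.\ $|F(v) - \xi| < t^{-\kappa}$, which is precisely the required integer solution.

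Since the argument reduces to reading off the leading asymptotic of Theorem \ref{thm:singleapprox}, there is no substantive obstacle at this stage; all of the analytic work sits in the theorem itself. The only subtlety worth flagging is that the exceptional null set $\Omega_{\xi,\kappa}^{c}$ depends a priori on both $\xi$ and $\kappa$, but this is entirely consistent with the way the corollary is quantified (the ``for almost all $F$'' clause lies inside the scope of the choices of $\xi$ and $\kappa$), so no countable-union or Baire-style refinement is needed to conclude.
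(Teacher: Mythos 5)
Your argument is correct and is precisely the one the paper intends: the corollary is stated as an immediate consequence of Theorem \ref{thm:singleapprox} applied to the family $I_t=(\xi-t^{-\kappa},\xi+t^{-\kappa})$, with the leading term $c_F|I_t|t^{n-d}$ dominating the error and forcing the count to be positive for all large $t$. Your remark about the exceptional null set depending on $\xi$ and $\kappa$ correctly identifies the scope of the quantifiers, so nothing further is needed.
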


For the results described above, one first fixes the shrinking sets, or for the case of intervals the center point $\xi$, and only then obtain a result for almost all polynomials, so that this full measure set of polynomials may depend on $\xi$.  A natural question is then, how well can one polynomial (chosen at random), approximate all target points $\xi$?  This question was addressed in \cite{Bourgain2016} for the case of indefinite diagonal ternary quadratic forms, and in \cite{GhoshKelmer2018} for general indefinite ternary quadratic forms. In these cases they showed that given a sequence $N(t)$ and $\delta(t)$ such that $\frac{N(t)}{t^{a}\delta(t)^2}\to 0$ with $a<1$ then for almost all $Q \in Y_{2,1}$ and for all sufficiently large $t$,
$$\sup_{|\xi|\leq N(t)}\min_{v\in \Z^n,\;\|v\|\leq t}|Q(v)-\xi|<\delta(t).$$

Using our method we are also able to give the following effective counting estimate in this setting. 
\begin{thm}\label{thm:sim2}\label{t2}
Let $n=p+q>d$ be as in Theorem \ref{thm:singleapprox}. Let $0\leq \eta<\min\{d, n-d\}$ and let $0< \kappa< \tfrac{n-d-\eta}{2}$. Let $N(t)$ be a non-decreasing function satisfying that $N(t)=O(t^\eta)$. Then there is $\nu>0$ such that for almost all $F\in Y_{p,q}^{(d)}$  for any interval $I\subset [-N(t),N(t)]$ with $|I|\geq t^{-\kappa}$ we have
\begin{equation}\label{equ:allint}
\#\{v\in \Z^n: F(v)\in I,\; \|v\|\leq t\}=c_F|I|t^{n-d}+O_F(|I|t^{n-d-\nu}).
\end{equation}
\end{thm}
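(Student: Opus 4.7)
The plan is to upgrade Theorem \ref{thm:singleapprox} to a result uniform over all intervals $I\subset[-N(t),N(t)]$ with $|I|\geq t^{-\kappa}$ via discretization of the target space and a Borel--Cantelli argument; the engine is the very same Rogers second-moment estimate that underlies Theorem \ref{thm:singleapprox}.

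First, I restrict attention to the dyadic times $t_k=2^k$ and partition $[-N(t_k),N(t_k)]$ into $M_k=O(t_k^{\eta+\kappa})$ elementary intervals $J_{k,j}$ of common length $t_k^{-\kappa}$. Any admissible $I\subset[-N(t_k),N(t_k)]$ with $|I|\geq t_k^{-\kappa}$ is then an exact union of some of the $J_{k,j}$, modulo at most two boundary intervals. Since $|I|\geq t_k^{-\kappa}$, the boundary contribution to the count is controlled by the same elementary-interval asymptotic and is absorbed into $O_F(|I|t^{n-d-\nu})$ provided $\nu$ is chosen small enough.

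Second, Rogers' second moment formula, as used in the proof of Theorem \ref{thm:singleapprox}, yields for any interval $J$ with $|J|\geq t^{-\kappa}$ a variance bound
$\mathrm{Var}(N_F(J,t))=O(|J|\,t^{n-d})$, where $N_F(J,t)=\#\{v\in\Z^n:F(v)\in J,\,\|v\|\leq t\}$ and the variance is taken over $F\in Y^{(d)}_{p,q}$. Chebyshev's inequality with threshold $|J|t^{n-d-\nu}$ then gives a tail bound of the form
\[
\mu\bigl\{F\in Y^{(d)}_{p,q}:\bigl|N_F(J,t)-c_F|J|\,t^{n-d}\bigr|>|J|\,t^{n-d-\nu}\bigr\}\;\leq\;C\,t^{2\nu-(n-d)+\kappa}.
\]
Union bounding this over the $M_k$ elementary intervals and summing over $k$ requires the exponent $\eta+2\kappa-(n-d)+2\nu$ to be strictly negative, which is possible precisely when $\kappa<(n-d-\eta)/2$, in which case any sufficiently small $\nu>0$ works. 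Borel--Cantelli then produces a conull set of $F\in Y^{(d)}_{p,q}$ such that for every large $k$, every elementary interval $J_{k,j}$ simultaneously satisfies the power-saving asymptotic. For arbitrary $t\in[t_k,t_{k+1}]$, monotonicity of $N_F(I,\cdot)$ in $t$ together with $t_{k+1}^{n-d}\asymp t^{n-d}$ lets me interpolate between consecutive dyadic times, and summing the elementary-interval asymptotics over the dyadic decomposition of $I$ yields \eqref{equ:allint} with a slightly smaller but still positive power saving.

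The main obstacle is arranging the Chebyshev-plus-union-bound to survive, and the hypothesis $\kappa<(n-d-\eta)/2$ is sharp for this scheme: the factor of two reflects one loss of $|J|$ in the Chebyshev denominator and a second loss to the number $t^{\eta+\kappa}$ of intervals we must union bound over. Establishing the variance bound of order $|J|\,t^{n-d}$ at the shrinking scale $|J|=t^{-\kappa}$ requires the same careful treatment of the diagonal and off-diagonal contributions in Rogers' formula that was needed for Theorem \ref{thm:singleapprox}; once that input is in hand, the discretization and Borel--Cantelli step above are essentially routine.
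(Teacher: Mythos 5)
Your high-level plan---discretize the time variable, discretize the collection of intervals, apply Rogers/Chebyshev and a union bound, then Borel--Cantelli---is the right strategy and is the one the paper follows. But there are three concrete gaps in how you set it up, and the first is the central difficulty the paper's proof is designed to handle.

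\textbf{The sets you apply Rogers to depend on $g$.} After translating to a lattice count via $\#\{v\in \Z^n: F(v)\in I, \|v\|\leq t\}=\#(\Z^ng\cap F_0^{-1}(I)\cap B_tg)$, the region $A_{g,I,t}=F_0^{-1}(I)\cap B_tg$ depends on the same $g$ parametrizing the lattice. Rogers' second moment bounds $\int_\cX |D(\Lambda,A)|^2\,d\mu(\Lambda)$ for a \emph{fixed} set $A$ as the lattice varies; it does not directly give a variance bound for the $g$-dependent quantity $N_F(J,t)$ over $F$, which is what you invoke. The paper resolves this by covering the compact set $\cK$ by $O(\epsilon_k^{-d_n})$ translates $\cO_{\epsilon_k}h$ of a small norm ball (Lemma \ref{prop:covering}) and then, for each $t\in[t_k,t_{k+1}]$ and $g\in\cO_{\epsilon_k}h$, sandwiching $A_{g,I,t}$ between sets $\underline{A}_{k,i,h}\subset A_{g,I,t}\subset\overline{A}_{k,i,h}$ that depend only on the \emph{fixed} $h$, $k$, $i$. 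Only then does the discrepancy bound (Lemma \ref{l:MAT}) apply, and the union bound must range over this covering as well, contributing the extra factor $\#\cI_k=O(k^{d_n})$. Your accounting omits this entirely.

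\textbf{Dyadic $t_k=2^k$ is too coarse.} You interpolate in $t$ by monotonicity, $N_F(I,t_k)\leq N_F(I,t)\leq N_F(I,t_{k+1})$, and appeal to $t_{k+1}^{n-d}\asymp t^{n-d}$. But $t_{k+1}^{n-d}=2^{n-d}t_k^{n-d}$, so this only determines $N_F(I,t)$ up to a multiplicative constant $2^{n-d}$, not up to a factor $1+O(t^{-\nu})$. To get a power-saving error from this interpolation you need $t_{k+1}/t_k=1+o(1)$ at a polynomial rate; the paper uses $t_k=k^\alpha$ so that $t_{k+1}/t_k=1+O(1/k)$ and the resulting relative error $O(1/k)=O(t_k^{-1/\alpha})$ is itself a power saving. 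A geometric sequence cannot work here.

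\textbf{The elementary-interval scale $t^{-\kappa}$ is too coarse.} You partition $[-N(t_k),N(t_k)]$ into pieces of length $t_k^{-\kappa}$ and say the two boundary intervals ``are absorbed into $O_F(|I|t^{n-d-\nu})$.'' When $|I|\asymp t^{-\kappa}$ this fails: a boundary piece of length $\leq t^{-\kappa}$ contributes a count of size up to $\asymp c_F t^{n-d-\kappa}$, while the target error is $|I|t^{n-d-\nu}\asymp t^{n-d-\kappa-\nu}$, which is strictly smaller. The paper avoids this by first proving the uniform statement at a strictly finer scale $t^{-\kappa'}$ with $\kappa'\in(\kappa,\tfrac{n-d-\eta}{2})$ (Theorem \ref{thm:simuapprox}), and then decomposing a general $I$ with $|I|\geq t^{-\kappa}$ into pieces of length $t^{-\kappa'}$; the boundary correction is then $O(t^{n-d-\kappa'})=O(|I|t^{n-d-(\kappa'-\kappa)})$, which is a genuine power saving for $\nu<\kappa'-\kappa$.

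Your sharpness analysis of the condition $\kappa<\tfrac{n-d-\eta}{2}$ is correct (Chebyshev loses one power of $|J|$, the union over $\sim t^{\eta+\kappa}$ intervals loses another), and with the covering-lemma factor only affecting the choice of the sequence $t_k=k^\alpha$ rather than the ultimate condition on $\kappa$, the threshold is indeed $\tfrac{n-d-\eta}{2}$. But as written the argument has a real hole at the first point and needs the finer scales at the second and third.
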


\begin{rem}
Taking the function $N(t)$ to be a constant implies that in order for \eqref{e:main} to have integer solutions for all sufficiently large $t$ and for all $\xi$ in some compact set, we need an exponent
$\kappa<\tfrac{n-d}{2}$ rather than $n-d$ as we got for a fixed $\xi$. It is unclear if this is really the best one can hope for or if it is just an artifact of the proof.
\end{rem}
As a consequence we get the following generalization of the result of \cite{GhoshKelmer2018} to higher dimensions $n\geq 3$ as well as higher degrees.
\begin{cor}
Let $n=p+q>d$ be as in Theorem \ref{thm:singleapprox}. Given a non-decreasing functions $N(t)=O(t^\eta)$  with $\eta<\min\{d,n-d\}$ and a non-increasing function $\delta(t)$ satisfying that $\frac{t^{\eta-a}}{\delta(t)^2}\to 0$ for some $a<n-d$,  
we have that for almost all  $F\in Y^{(d)}_{p,q}$ and for all sufficiently large $t$
$$\sup_{|\xi|\leq N(t)}\min_{v\in \Z^n,\; \|v\|\leq t}|F(v)-\xi|<\delta(t).$$
\end{cor}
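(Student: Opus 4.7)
The plan is to deduce this corollary directly from Theorem \ref{t2} applied to the family of intervals $I_\xi := (\xi - \delta(t), \xi + \delta(t))$, which are tailored so that a vector $v$ satisfies $|F(v) - \xi| < \delta(t)$ exactly when $F(v) \in I_\xi$. The task therefore reduces to showing that, for almost every $F$ and all sufficiently large $t$, each such $I_\xi$ (as $\xi$ ranges over $[-N(t), N(t)]$) contains the value $F(v)$ for some $v \in \Z^n$ with $\|v\| \leq t$.

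Two conditions have to be matched before Theorem \ref{t2} applies. First, the $I_\xi$ should lie inside $[-N'(t), N'(t)]$ for some $N'(t) = O(t^\eta)$; since $\delta$ is non-increasing and hence bounded by $\delta(1)$, the choice $N'(t) := N(t) + \delta(1)$ does the job, and the condition $\eta < \min\{d, n-d\}$ is preserved. Second, each $|I_\xi| = 2\delta(t)$ must satisfy $|I_\xi| \geq t^{-\kappa}$ for some $\kappa < \tfrac{n-d-\eta}{2}$. Here the hypothesis $\tfrac{t^{\eta - a}}{\delta(t)^2} \to 0$ enters: since $\delta$ is bounded, this forces $a > \eta$, and setting $\kappa := \tfrac{a - \eta}{2}$ yields $0 < \kappa < \tfrac{n-d-\eta}{2}$ (using $a < n-d$) together with $\delta(t)\, t^\kappa \to \infty$. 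Consequently $2\delta(t) \geq t^{-\kappa}$ for all sufficiently large $t$, uniformly in $\xi$.

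With these parameters, Theorem \ref{t2} applied to $N'$ and $\kappa$ supplies, for almost every $F \in Y^{(d)}_{p,q}$, constants $c_F > 0$ and $\nu > 0$ such that for all $t$ large enough and every interval $I \subset [-N'(t), N'(t)]$ with $|I| \geq t^{-\kappa}$,
$$\#\{v \in \Z^n : F(v) \in I,\; \|v\| \leq t\} = c_F |I|\, t^{n-d} + O_F\bigl(|I|\, t^{n-d-\nu}\bigr).$$
Applied with $I = I_\xi$, the main term $2 c_F \delta(t)\, t^{n-d}$ dominates the error $O_F(\delta(t)\, t^{n-d-\nu})$ once $t$ exceeds a threshold depending only on $F$, so the count is strictly positive. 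This produces $v \in \Z^n$ with $\|v\| \leq t$ and $|F(v) - \xi| < \delta(t)$. Since both the exceptional set of $F$'s and the threshold in $t$ are independent of $\xi$, taking the supremum over $|\xi| \leq N(t)$ yields the stated bound.

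Essentially all the difficulty is already inside Theorem \ref{t2}; the only step that requires care is the parameter matching above, in particular recognizing that the hypothesis on $\delta$ forces $\eta < a$ and that the choice $\kappa = (a-\eta)/2$ then lands strictly inside the admissible range $(0, (n-d-\eta)/2)$ allowed by Theorem \ref{t2}. Uniformity of the error term in $\xi$ is built into the statement of Theorem \ref{t2}, so no extra union-bound argument is needed.
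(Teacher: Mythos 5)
Your proof is correct and is exactly the deduction the paper leaves implicit (the corollary is stated without proof, as a direct consequence of Theorem~\ref{t2}); the parameter matching, in particular noticing that the hypothesis on $\delta$ forces $a>\eta$ and choosing $\kappa=(a-\eta)/2$, is the key step and you handle it carefully. One tiny point worth making explicit: the strict inequality survives the supremum because $\xi\mapsto\min_{v\in\Z^n,\,\|v\|\leq t}|F(v)-\xi|$ is the distance from $\xi$ to a finite set and hence continuous, so it attains its maximum on the compact interval $[-N(t),N(t)]$ at some $\xi^*$ where your argument gives a value $<\delta(t)$.
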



\subsection{Outline of proof}
As some parts of the proof can get a bit technical, for readers' convenience we outline here the general strategy. The main idea is the following general principle: Given a nice enough large set in $\R^n$ we expect the number of lattice points in the set to be close to its volume. In particular, we consider here sets of the form 
$$F^{-1}(I)\cap B_t=\{v\in \R^n: F(v)\in I,\; \|v\|\leq t\},$$
whose volume is expected to grow like  $c_F|I|t^{n-d}$. Here and below we denote by $B_t\subset \R^n$ the closed ball centered at the origin with radius $t$. In particular, for $|I|$ of order $t^{-\kappa}$ with $\kappa<n-d$ the volume of these sets grows with $t$ and we expect them to contain integer points. 

More explicitly, writing an element  $F\in Y_{p,q}^{(d)}$ as $F(v)=F_0(vg)$ with $g\in \SL_n(\R)$, since $F^{-1}(I)=F_0^{-1}(I)g^{-1}$ we can write 
$$\#\{v\in \Z^n: \|v\|\leq t,\; F(v)\in I\}=\#(\Z^ng\cap F_0^{-1}(I)\cap B_tg).$$
Next we recall the result of Schmidt \cite{Schmidt1960}, relying on Rogers' second moment formula \cite{Rogers1955}, who showed that given any increasing family of sets $A_t$ in $\R^n$ for almost all lattices $\Lambda=\Z^n g$ with $g\in \SL_n(\R)$ one has that $\#(\Lambda \cap A_t)=\vol(A_t)+O(\sqrt{\vol(A_t)}\log^2(\vol(A_t))$. When the interval $I$ is fixed,
the family $A_t=F_0^{-1}(I)\cap B_t$ is an increasing family and hence we have a very good estimate for  $\#(\Lambda\cap A_t)=\#(\Z^ng\cap F_0^{-1}(I)\cap B_t)$.
 This is still not enough, since for our purpose, the expanding sets $F_0^{-1}(I)\cap B_tg$ also depend on $g$.  To overcome this problem we replace them with sets of the form $F_0^{-1}(I)\cap B_th$ with $h\in \SL_n(\R)$ fixed and taken from a sufficiently dense set.

This was the approach used in \cite{AthreyaMargulis2018} for the case of a fixed interval. When considering shrinking intervals $I_t$, we encounter another difficulty, that our family of sets $F_0^{-1}(I_t)\cap B_tg$ is no longer an increasing family so the results of Schmidt do not apply. The main new ingredient in our proof is using a different (simpler) interpolation argument, relying again on Rogers' second moment formula, which allows us to handle families of sets that are increasing in one aspect and decreasing in another, as long as their volume grows sufficiently regularly. Of course, we pay a price that we no longer have a square-root bound for the remainder, but we do get some power saving which is sufficient for our result. 

We further note that, for this approach to work, it is not enough to know the asymptotics of $\vol(F^{-1}(I)\cap B_t)$ as $t\to\infty$, and one needs an explicit estimate for the volume with a power saving  bound for the remainder of the form
\begin{equation}\label{e:vol}
 \vol(F^{-1}(I)\cap B_t)=c_F|I|t^{n-d}+O_F(|I|t^{n-d-\nu}).
\end{equation}
Following some preliminary results in section \ref{sec:setup}, we devote section \ref{s:vol} to establish such a volume estimate. Then in section \ref{sec:single} we use this approach to prove Theorem \ref{t1}. Then in section \ref{sec:uniform} we follow a similar argument, but instead of considering a single target $\xi$ we consider a sufficiently dense collection of targets at once, in order to get the uniform estimate in Theorem \ref{t2}.
 
 \begin{rem}
 We note that this method is quite soft and works in general as long as one has a volume estimate of the form \eqref{e:vol}. While we establish this estimate here only for homogenous polynomials in the orbit $Y_{p,q}^{(d)}$ for even $d$, we expect that such an estimate (and hence similar results on integer values) should hold also for other homogenous polynomials. 
 \end{rem}
 
 \begin{rem}
Another key ingredient for this method is Rogers' second moment formula (see section \ref{s:disc}).  Recently, in \cite{KelmerYu2018}, we showed how such a second moment formula can be generalized from the space of lattices $\cX=\SL_n(\Z)\bk \SL_n(\R)$ to other homogenous spaces. Using such a generalization will allow one to apply this method in even greater generality. For example, replacing $\SL_n(\Z)$ with a congruence subgroup allows one to deal with certain inhomogeneous forms (see \cite{GhoshKelmerYu19}), and considering different semisimple groups $G$ allows one to tackle this problem for polynomials on other varieties with a transitive $G$-action as will be shown in \cite{KelmerYu19}.
 \end{rem}

 \section{Setup and some preliminary results}\label{sec:setup}
In this section, we set up some notation and provide some preliminary results that are needed for the proofs of our main theorems. 

\subsection{Notation}
In what follows we fix $n\geq 3$, $p, q\geq 1$ and $d\geq 2$ even with $n=p+q$ and $n>d$. 
Let $G=\SL_n(\R)$ and $\G=\SL_n(\Z)$ and $K=\SO(n)$. We denote by $\mu$ the Haar measure of $G$ normalized to be a probability measure on $\G\bk G$,

 For any $g\in G$ we denote by $\|g\|_{\rm op}$ the operator norm given by
$$\|g\|_{\rm op}=\sup\{\|vg\|: v\in \R^n,\; \|v\|=1\},$$
with $\|v\|=\sqrt{\sum_i v_i^2}$ the standard Euclidean norm on $\R^n$. We then define a symmetric norm on $G$ by
$$\|g\|=\max\{\|g\|_{\rm op}, \|g^{-1}\|_{\rm op}\}.$$

We will use the notation $A=O(B)$ as well as $A\ll B$ to indicate that there is a constant $c>0$ such that $A\leq c B$, and we will use subscripts to emphesize the dependence of this constant on additional parameters. We will also use the notation $A\asymp B$ to mean that $A\ll B\ll A$. Since we fix $p,q, n$ and $d$, all implied constants may depend on them.

\subsection{A covering lemma}
For $\e>0$ small we consider the norm balls 
$$\cO_\e=\{g\in G: \|g\|< 1+\e\},$$ 
and note that for any $g\in \cO_{\epsilon}$ with $0<\epsilon<1$ and any $t>0$ we have 
\begin{equation}\label{equ:covering}
B_{(1-\epsilon)t}\subset B_t g\subset B_{(1+\epsilon)t}.
\end{equation}

Noting that the norm $\|g\|$ is right $K$-invariant, since the Haar measure is absolutely continuous with respect to the volume measure on the Lie algebra, we have that for any $0<\epsilon\leq1$
\begin{equation}\label{e:volO}
\mu(\cO_\e)\asymp  \e^{d_n},\end{equation}
with  $d_n=\textrm{dim}_{\R}(G/K)=\frac{(n+2)(n-1)}{2}$. Any compact set in $G$ can be covered by finitely many translates of $\cO_\e$, and we will use the following estimate for the number of translates that are needed.

\begin{Lem}\label{prop:covering}
Let $\cK\subset G$ be a fixed compact set. Then for any $0<\e\leq1$ there exists a finite set $\cI_{\epsilon}\subset \cK$ such that $\#\cI_{\epsilon}=O_{\cK}(\epsilon^{-d_n})$ and $\cK\subset \bigcup_{h\in \cI_{\epsilon}}\cO_{\epsilon}h$.
\end{Lem}
\begin{proof}
Since $\cK$ can be covered by a finite number of translates of $\cO_1$ it is enough to show this for $\cK=\cO_1$. Now for $\e>0$ let $\cI_\e$ be a maximal set of points in $\cO_1$ such that the translates $\cO_{\e/3}h$ with $h\in \cI_\e$ are pairwise disjoint. Note that for $0<\e\leq 1$ and for $h, h'\in \cO_1$, if $h'\not\in \cO_\e h$ then $\cO_{\e/3}h\cap \cO_{\e/3}h'=\emptyset$, and hence, the maximality of $\cI_\e$ implies that $\cO_1\subseteq \bigcup_{h\in \cI_\e} \cO_\e h$. Moreover, since $\bigcup_{h\in\cI_{\e}}\cO_{\e/3}h$ is a disjoint union contained in $\cO_2$ then  
$\mu(\cO_2)\geq \mu(\bigcup_{h\in\cI_{\e}}\cO_{\e/3}h)=\#\cI_\e \mu(\cO_{\e/3})$ so $\#\cI_\e\leq  \mu(\cO_2)\mu(\cO_{\e/3})^{-1}\ll \e^{-d_n}$ by \eqref{e:volO}. 
\end{proof}

\subsection{Discrepancy bounds}\label{s:disc}
As explained in the introduction, we can translate the problem of counting integer values of a homogeneous polynomial to a problem of counting lattice points in a region of $\R^n$, and the notion of a generic polynomial can be translated to counting lattice points from a generic lattice.

Recalling our expectation that the number of lattice points in a set should be roughly the volume we define the discrepancy for a lattice $\Lambda$ in $\R^n$ and a finite-volume set $A\subseteq \R^n$ as 
\begin{equation}\label{e:disc}
D(\Lambda,A)=\left|\#(\Lambda \cap A)-\vol(A)\right|.
\end{equation}

By using Siegel's mean value formula together with Rogers' second moment formula we can get very good mean square bounds for the discrepancy when averaged over the space of lattices. Recall the space of rank $n$ unimodular lattices can be parametrized by $\cX=\G\bk G$, where we identify the coset $\G g$ with the  lattice $\Lambda=\Z^ng$, and let $\mu$ be the probability measure on $\cX$ coming from the Haar measure of $G$ as before.
We recall that the \textit{Siegel transform}, $\hat{f}:\cX\to \C$, of a bounded compactly supported function $f:\R^n\to \C$ is defined by
$$\hat{f}(\Lambda):=\sum_{ v\in \Lambda\bk \{0\}}f(v),$$
and the Siegel's mean value formula states that 
$$\int_{\cX}\hat{f}(\Lambda)d\mu(\Lambda)=\int_{\R^n}f(v)dv.$$
Moreover, a direct consequence of Rogers' second moment formula \cite{Rogers1955} for $n\geq 3$ implies that 
$$\int_{\cX}\left|\hat{f}(\Lambda)\right|^2d\mu(\Lambda)=\left|\int_{\R^n}f(v)dv\right|^2 +O\left(\int_{\R^n} \left|f(v)\right|^2dv\right).$$
In particular applying this estimate for $f=\chi_A$ the indicator function of a bounded measurable set $A\subseteq \R^n$ and noting that $\hat{f}(\Lambda)=\#((\Lambda\bk\{0\})\cap A)$ we get that
there is some $C_n'>0$ such that 
$$\int_{\cX} \left|D(\Lambda, A)\right|^2d\mu(\Lambda)\leq C'_n \vol(A)+1,$$
where the term $1$ is only needed if $A$ contains the origin. In particular, there exists some $C_n$ such that for all bounded measurable sets $A\subset \R^n$ with $\vol(A)>1$
\begin{equation}\label{e:DiscVar}
\int_{\cX} \left|D(\Lambda, A)\right|^2d\mu(\Lambda)\leq C_n \vol(A).
\end{equation}
Using this bound we get a good estimate on the measure of the collection of lattices with large discrepancy.
For a fixed compact set $\cK\subset G$ for any large parameter $T$ and a set $A\subseteq \R^n$ define the set 
\begin{equation}\label{e:MAT}
\cM^{(\cK)}_{A,T}:=\{g\in \cK:\;   D(\Z^ng,A)\geq T\}.
\end{equation}
We then get the following estimate.
 
\begin{Lem}\label{l:MAT}
Fix $\cK\subset G$ compact. For any bounded and measurable subset $A\subset \R^n$ with $\vol(A)>1$ and any $T>0$ 
$$\mu(\cM^{(\cK)}_{A,T})\ll_\cK \frac{\vol(A)}{T^2}.$$
\end{Lem}
\begin{proof}
Let $\cF\subseteq G$ be some fixed fundamental domain for $\cX$.
Since $G$ is tessellated by translates $\g\cF$ with $\g\in \G$ and $\cK$ is compact, it is covered by a finite number of translates, say, $\cF_L=\bigcup_{i=1}^L \g_i\cF$. Thus $\cM^{(\cK)}_{A,T}\subset \cK\subset \cF_L$ and we can bound
$$\mu(\cM^{(\cK)}_{A,T})\leq \frac{1}{T^2}\int_{\cM^{(\cK)}_{A,T}}|D(\Z^ng, A)|^2 d\mu(g)\leq \frac{1}{T^2}\int_{\cF_L}|D(\Z^ng, A)|^2d\mu(g).$$
Now, since any of the translates of $\cF$ is also a fundamental domain,  by \eqref{e:DiscVar} we can bound 
\begin{displaymath}
\int_{\cF_L}|D(\Z^ng, A)|^2d\mu(g)=L\int_{\cX}|D(\Z^ng, A)|^2d\mu(g)\leq C_nL\vol(A).\qedhere
\end{displaymath}
\end{proof}

We conclude this section with a simple interpolation argument relating the discrepancy of different sets, we omit the proof which is straightforward.
\begin{Lem}\label{l:inter}
For any finite-volume sets $A_1\subseteq A\subseteq A_2\subset\R^n$ and any $\Lambda\in \cX$ we have that
$$D(\Lambda,A)\leq \max\{D(\Lambda, A_1),D(\Lambda,A_2)\}+\vol(A_2\setminus A_1).$$
\end{Lem}

%

\section{Volume estimates}\label{s:vol}
Given a homogenous polynomial $F$ of degree $d$ and an interval $I\subseteq \R$, the heuristic argument given in \cite{AthreyaMargulis2018} leads to the expectation that 
$\vol(F^{-1}(I)\cap B_t))$ is asymptotic to $c_F|I|T^{n-d}$. Such asymptotics were established for quadratic forms in \cite{EskinMargulisMozes1998}, and we refine their method and use it to give an explicit estimate with a power saving on the remainder for $\vol(F^{-1}(I)\cap B_t))$ for any $F\in Y_{p,q}^{(d)}$ with $d<p+q$ even.
\begin{thm}\label{t:vol}
Fix $d\geq 2$ even, and let $n=p+q> d$ with $p, q\geq 1$, and let $N\geq 1$. For $F\in Y_{p,q}^{(d)}$ and $I\subseteq [-N,N]$ measurable, there exists $c_F>0$ such that  for any $T> 2N^{1/d}$ we have 
\begin{equation}\label{equ:vol}
\vol(F^{-1}(I)\cap B_T)=c_F |I|T^{n-d}+O_F(|I| N^{1/d}T^{n-d-1}\log(T)),
\end{equation}
where the implied constant is uniform over compact sets and the $\log(T)$ factor is only needed when $n=2d-1$.
\end{thm}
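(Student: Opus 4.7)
My plan is to pass to spherical polar coordinates $v=\rho w$ with $\rho=\|v\|\in[0,T]$ and $w\in S^{n-1}$, and exploit the homogeneity $F(v)=\rho^d F(w)$. Writing $d\sigma$ for the standard surface measure on the unit sphere, one has
\[
\vol(F^{-1}(I)\cap B_T)=\int_{S^{n-1}}\left(\int_0^T \chi_I(\rho^d F(w))\,\rho^{n-1}\,d\rho\right)d\sigma(w).
\]
Off the $\sigma$-null set $\{F(w)=0\}$, the substitution $u=\rho^d F(w)$ gives $\rho^{n-1}\,d\rho=\tfrac{|u|^{(n-d)/d}}{d\,|F(w)|^{n/d}}\,du$ (with $u$ having the sign of $F(w)$ and $|u|\leq|F(w)|T^d$), and Fubini rewrites the volume as
\[
\vol(F^{-1}(I)\cap B_T)=\frac{1}{d}\int_I |u|^{(n-d)/d}\,\Phi_F(|u|/T^d;\,\mathrm{sgn}(u))\,du,
\]
where for $c>0$ and $\varepsilon\in\{+,-\}$ I put
\[
\Phi_F(c;\varepsilon):=\int_{\{w\in S^{n-1}:\;\varepsilon F(w)\geq c\}}|F(w)|^{-n/d}\,d\sigma(w).
\]

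\textbf{Asymptotics of $\Phi_F$.} The heart of the argument is expanding $\Phi_F(c;\varepsilon)$ as $c\to 0^+$. I first verify that $0$ is a regular value of $F|_{S^{n-1}}$ for every $F\in Y_{p,q}^{(d)}$: writing $F=F_0\circ g$ with $g\in\SL_n(\R)$, a critical point $w$ of $F|_{S^{n-1}}$ at level $0$ satisfies $\nabla F(w)=g\nabla F_0(wg)\parallel w$, and Euler's identity $v\cdot\nabla F_0(v)=dF_0(v)$ then forces $\nabla F_0(wg)=0$, hence $wg=0$, contradicting $w\neq 0$. Consequently $\Psi_F^\varepsilon(s):=\sigma(\{w\in S^{n-1}:\;\varepsilon F(w)\geq s\})$ is smooth near $s=0$ with $(\Psi_F^\varepsilon)'(0)<0$. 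The identity
\[
\Phi_F(c;\varepsilon)=c^{-n/d}\Psi_F^\varepsilon(c)-\frac{n}{d}\int_c^\infty \Psi_F^\varepsilon(s)\,s^{-n/d-1}\,ds,
\]
obtained by integration by parts in the coarea representation of $\Phi_F$, combined with the Taylor expansion of $\Psi_F^\varepsilon$ about $0$, yields after cancellation of the divergent $c^{-n/d}$ contributions
\[
\Phi_F(c;\varepsilon)=c^{(d-n)/d}\left(\frac{-(\Psi_F^\varepsilon)'(0)\,d}{n-d}+O_F(c)\right),
\]
with an additional $\log(1/c)$ factor appearing in the borderline case $n=2d-1$, where the next-order integral in the expansion hits a critical exponent.

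\textbf{Assembly and main obstacle.} Multiplying by $|u|^{(n-d)/d}/d$ collapses $(|u|/T^d)^{(d-n)/d}\cdot|u|^{(n-d)/d}$ into $T^{n-d}$, so summing over the two signs and integrating over $u\in I\subseteq[-N,N]$ produces the main term $c_F|I|T^{n-d}$ with
\[
c_F=\frac{-(\Psi_F^+)'(0)-(\Psi_F^-)'(0)}{n-d}>0,
\]
while the $O_F(c)$ remainder contributes $O_F(T^{n-d}\cdot N/T^d\cdot|I|)=O_F(N|I|T^{n-2d})$, which under $T>2N^{1/d}$ fits inside the claimed $O_F(|I|N^{1/d}T^{n-d-1}\log T)$ (the $\log T$ factor absorbing the borderline case $n=2d-1$). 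The principal technical obstacle is making all implied constants uniform over compact subsets of $Y_{p,q}^{(d)}$: this reduces to showing that the Taylor coefficients $(\Psi_F^\varepsilon)^{(k)}(0)$ depend continuously on $F$, which follows from the smooth dependence of the zero hypersurface $\{F=0\}\cap S^{n-1}$ on $F$ via the implicit function theorem, together with a uniform positive lower bound on $\|\nabla_{S^{n-1}} F\|$ along this hypersurface afforded by the regular-value argument above.
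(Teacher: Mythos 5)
Your route is genuinely different from the paper's and, after a few corrections, it works. The paper proves a smoothed estimate for the standard form $F_0$ by separating the variables into the $p$ positive and $q$ negative blocks, using $\ell^d$-spherical coordinates $u_i = r_i\omega_i$ and the substitution $s = r_1^d - r_2^d$, and then removes the smoothing by approximating $\chi_{B_1}$ from above and below. Your proof instead uses Euclidean spherical coordinates for an arbitrary $F \in Y^{(d)}_{p,q}$, exploits homogeneity via $u = \rho^d F(w)$, and reduces the volume to the weighted level-set functional $\Phi_F(c;\varepsilon)$, whose $c\to 0^+$ asymptotics you get from a Taylor expansion of $\Psi^\varepsilon_F(s) = \sigma(\{w : \varepsilon F(w)\ge s\})$ at $s=0$, using your (correct) verification that $0$ is a regular value of $F|_{S^{n-1}}$. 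This is arguably cleaner, avoids smoothing, and gives a conceptual description of $c_F$ as (essentially) $\int_{\{F=0\}\cap S^{n-1}} |\nabla_S F|^{-1}\,d\mathcal H^{n-2}/(n-d)$.

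There are, however, two points where the details as written are off, both fixable. First, the asserted expansion $\Phi_F(c;\varepsilon) = c^{(d-n)/d}\bigl(C + O_F(c)\bigr)$ is not correct in general: carrying out the cancellation carefully, the sub-leading contribution is a nonzero \emph{constant} (coming from evaluating the integrated-by-parts terms at the cut-off $s_0$ and from $-\tfrac{n}{d}\int_{s_0}^\infty$), so one has $\Phi_F(c;\varepsilon) = C_F^\varepsilon c^{(d-n)/d} + O_F(1)$, i.e.\ a relative error $O_F(c^{(n-d)/d})$ rather than $O_F(c)$. For $d < n < 2d$ this is strictly worse than what you wrote (and your claim that $n = 2d-1$ is the borderline is also not right --- the borderline for a genuine $\log$ in the $\Psi''$-term is $n = 2d$, and even then it is dominated by the $O(1)$ constant). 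The good news is that the correct relative error still suffices: after multiplying by $|u|^{(n-d)/d}/d$ and integrating, the sub-leading piece gives $O_F\bigl(\int_I |u|^{(n-d)/d}\,du\bigr) = O_F(N^{(n-d)/d}|I|)$, which, under $T > 2N^{1/d}$, is $\ll |I| N^{1/d} T^{n-d-1}$ (no $\log$ needed). Second, your final formula $c_F = \bigl(-(\Psi^+_F)'(0) - (\Psi^-_F)'(0)\bigr)/(n-d)$, arrived at by \textquotedblleft summing over the two signs,\textquotedblright\ double-counts: for a given $u\in I$ only $\varepsilon = \mathrm{sgn}(u)$ is active, so you get $c_F = -(\Psi^+_F)'(0)/(n-d)$, and to have a \emph{single} constant $c_F$ work for all $I\subseteq[-N,N]$ you need to observe that $(\Psi^+_F)'(0) = (\Psi^-_F)'(0)$, since by the coarea formula both equal $-\int_{\{F|_{S^{n-1}} = 0\}} |\nabla_{S^{n-1}}F|^{-1}\,d\mathcal H^{n-2}$. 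Finally, the uniformity over compact sets as sketched is plausible via the implicit function theorem (smooth dependence of $\{F=0\}\cap S^{n-1}$ on $F$ and a uniform lower bound on $|\nabla_S F|$ along it), but this needs to be spelled out --- in particular one must ensure a uniform lower bound on the radius of Taylor validity $s_0$ and uniform control of the Taylor remainder.
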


We first give the following smoothed version.
\begin{Lem}\label{l:smoothvol}
Let $n=p+q> d$ be as in Theorem \ref{t:vol}. Let $h\in \cC_c^{\infty}(\R^n)$ be a nonnegative smooth function on $\R^n$ that is supported on $B_a$ for some $a>1$.
For any measurable set $I\subset [-1,1]$ with indicator function $\chi_I$ and for any $T>1$ we have
\begin{equation}
\int_{\R^n}h(\frac{v}{T})\chi_I(F_0(v))dv = J(h) |I|T^{n-d}+\left\{\begin{array}{ll}
O\left(\cS_1(h)|I|a [aT]^{n-2d}\right)  & n\geq 2d+1\\
O\left(\|h\|_\infty  |I| \log(aT)\right)+O\left(\cS_1(h)|I|a\right) & n=2d\\
O\left(\|h\|_\infty  |I| \right)+O\left(\cS_1(h) |I| \frac{\log(aT)}{T}\right)   & n=2d-1\\
O\left(\|h\|_\infty  |I| \right)+ O\left( \frac{\cS_1(h)|I|}{T}\right)& n<2d-1,
 \end{array}\right.
\end{equation}
where
$$J(h)=\frac{1}{d}\int_{0}^{\infty}\int_{S_d^{p-1}\times S_d^{q-1}} h\big((\omega_1+\omega_2)r\big)r^{n-d-1}d\omega_1d\omega_2 dr,$$ 
where $S_d^{p-1}\subseteq \R^p$ and $S_d^{q-1}\subseteq \R^q$ denote the unit spheres with respect to the $\ell^d$-norm with $d\omega_1$, $d\omega_2$ the corresponding cone measures, and 
$$\cS_1(h)=\max\left\{\|h\|_\infty, \|\tfrac{\partial h}{\partial v_i}\|_\infty\;, 1\leq i\leq n\right\}.$$
\end{Lem}
\begin{proof}
Denote by
$$\cI_{h,I}:=\int_{\R^n}h(\frac{v}{T})\chi_I(F_0(v))dv$$
the integral we want to compute. First we note that $\cI_{h,I}=\cI_{h,I\cap [-1,0)}+\cI_{h,I\cap [0,1]}$, so, up to replacing $F_0$ by $-F_0$, or equivalently, switching $p$ and $q$, we can assume that $I\subset [0,1]$.
Identify $\R^n=\R^p\times\R^q$ and write $v=(u_1,u_2)$ so that $F_0(v)=\|u_1\|_d^d-\|u_2\|_d^d$. For each factor we use spherical coordinates writing $u_i=r_i \omega_i$ 
with $r_i=\|u_i\|_d$ and 
$\omega_i=\frac{u_i}{\|u_i\|_d}$ in the unit sphere with respect to $\ell^d$-norm. To simplify notation we denote by $S^{p,q}=S_d^{p-1}\times S_d^{q-1}$.
With these coordinates we can write 
$$\cI_{h,I}=\int_0^{\infty}\int_0^{\infty}\int_{S^{p,q}}h(\frac{r_1\omega_1+r_2\omega_2}{T})\chi_I(r_1^d-r_2^d)r_1^{p-1}r_2^{q-1}d\omega_1d\omega_2dr_1dr_2.$$
Make a change of variable $s=r_1^d-r_2^d$  so that $r_1=(r_2^d+s)^{1/d}$ and $dr_1=\frac{ds}{d (r_2^d+s)^{\frac{d-1}{d}}}$.
With this change of variable, writing $r=\frac{r_2}{T}$ we get 
$$ \cI_{h,I}=\frac{T^{n-d}}{d}\int_{0}^{1}\chi_I(s)\int_{0}^{\infty}\int_{S^{p,q}}h(\omega_1(r^d+\tfrac{s}{T^d})^{1/d}+\omega_2 r)(r^d+\tfrac{s}{T^d})^{\frac{p-d}{d}} r^{q-1}d\omega_1d\omega_2 drds.$$
For $0<r\leq1/T$ and $0\leq s\leq 1$, using the estimates $(r^d+\tfrac{s}{T^d})^{\frac{p-d}{d}}\ll \frac{1}{T^{p-d}}$ if $p\geq d$ and $(r^d+\frac{s}{T^d})^{\frac{p-d}{d}}\leq r^{p-d}$ if $p<d$ we can bound the contribution to this integral of the range $0<r\leq 1/T$ by
$O(|I|\|h\|_\infty)$ to get that 
\begin{align*}
 \cI_{h,I}&=\frac{T^{n-d}}{d}\int_{0}^{1}\chi_I(s)\int_{1/T}^{\infty}\int_{S^{p,q}}h(\omega_1(r^d+\tfrac{s}{T^d})^{1/d}+\omega_2 r)(1+\tfrac{s}{(rT)^d})^{\frac{p-d}{d}} r^{n-d-1}d\omega_1d\omega_2 drds\\
 &+O(|I|\|h\|_\infty).
 \end{align*}
Now, noting that for $r>1/T$ and $0\leq s\leq 1$ we have  $(r^d+\tfrac{s}{T^d})^{1/d}=r(1+\frac{s}{(rT)^d})^{1/d}=r+O(\frac{1}{r^{d-1}T^d})$, and
we can estimate 
$$h(\omega_1(r^d+\tfrac{s}{T^d})^{1/d}+\omega_2 r)=h((\omega_1+\omega_2)r)+
O\left(\frac{\cS_1(h)}{r^{d-1}T^{d}}\right) $$
to get that 
 \begin{align*}
 \cI_{h,I}&=\frac{T^{n-d}}{d}\int_{0}^{1}\chi_I(s)\int_{1/T}^{\infty}\int_{S^{p,q}}(h((\omega_1+\omega_2)r)(1+\tfrac{s}{(rT)^d})^{\frac{p-d}{d}} r^{n-d-1}d\omega_1d\omega_2 drds\\
 &+O\left(\cS_1(h) |I|(T^{n-2d}\int_{1/T}^{a} r^{n-2d}dr)\right) +O(|I|\|h\|_\infty),
 \end{align*}
 where for the first part in the error term we used the assumption that $h$ is supported on $B_a$ noting that $\|\omega_1(r^d+\tfrac{s}{T^d})^{1/d}+\omega_2 r\|_2\geq r$ for all $(\omega_1,\omega_2)\in S^{p,q}$.
 Next, we can estimate $(1+\tfrac{s}{(rT)^d})^{\frac{p-d}{d}}=1+O(\frac{1}{(rT)^d})$ uniformly for $r> 1/T$ and $0\leq s\leq 1$ to get that 
 \begin{align*}
 \cI_{h,I}&=\frac{T^{n-d}}{d}\int_{0}^{1}\chi_I(s)\int_{0}^{\infty}\int_{S^{p,q}}(h((\omega_1+\omega_2)r)r^{n-d-1}d\omega_1d\omega_2 drds+O\left(|I|\|h\|_\infty\right)\\
 &+O\left(\|h\|_\infty  |I|T^{n-2d}\int_{1/T}^{a} r^{n-2d-1} dr\right)+O\left(\cS_1(h) |I|T^{n-2d}\int_{1/T}^{a} r^{n-2d}dr)\right) ,
 \end{align*}
 where we used that 
 $$\int_{0}^{1}\chi_I(s)\int_{0}^{1/T}\int_{S^{p,q}}h((\omega_1+\omega_2)r)r^{n-d-1}d\omega_1d\omega_2 drds\ll \|h\|_\infty |I| T^{d-n}.$$
Now estimate  
$$\int_{1/T}^{a} r^{n-2d-1} dr\ll \left\lbrace\begin{array}{ll} 
a^{n-2d} & n\geq 2d+1\\
\log(aT) & n=2d\\
T^{2d-n}  & n<2d,
\end{array}\right. $$
and similarly 
$$\int_{1/T}^{a} r^{n-2d} dr\ll \left\lbrace\begin{array}{ll} 
a^{n-2d+1} & n\geq 2d\\
\log(aT) & n=2d-1\\
T^{2d-n-1}  & n<2d-1
\end{array}\right. $$  
to get that 
\begin{align*}
 \cI_{h,I}&= J(h) |I|T^{n-d}+\left\{\begin{array}{ll}
O\left(\|h\|_\infty  |I| [aT]^{n-2d}\right)+O\left(\cS_1(h)|I|a [aT]^{n-2d}\right)  & n\geq 2d+1\\
O\left(\|h\|_\infty  |I| \log(aT)\right)+O\left(\cS_1(h)|I|a\right) & n=2d\\
O\left(\|h\|_\infty  |I| \right)+O\left(\cS_1(h) |I| \frac{\log(aT)}{T}\right)   & n=2d-1\\
O\left(\|h\|_\infty  |I| \right)+ O\left(\cS_1(h)|I| \frac{1}{T}\right)& n<2d-1.
 \end{array}\right.
\end{align*}
Noting that for $n\geq 2d+1$ the first term is bounded by the second term gives our result.
\end{proof}

We can now unsmooth to obtain the following.
\begin{proof}[Proof of Theorem \ref{t:vol}]
We use the notation $S^{p,q}=S_d^{p-1}\times S_d^{q-1}$ as before. First assume that $N=1$  so that $I\subseteq [-1,1]$. Let $h_0=\chi_{B_1}$ denote the indicator function of the unit ball and for small $\delta>0$
let $h_\delta^\pm$ be smooth functions taking values in $[0,1]$ approximating $h_0$ in the sense that $h_\delta^-(v)=\left\{\begin{array}{ll} 1 & \|v\|\leq1-\delta\\ 0 & \|v\|\geq 1\end{array}\right.$ and similarly $h_\delta^+(v)=\left\{\begin{array}{ll} 1 & \|v\|\leq1\\ 0 & \|v\|\geq 1+\delta \end{array}\right.$, and we can choose them so that $\cS_1(h_\delta^\pm)\ll \delta^{-1}$. 

Now recall that any $F\in Y_{p,q}^{(d)}$ satisfies that $F(v)=F_0(vg)$ for some $g\in G$ to get that 
$$\vol(F^{-1}(I)\cap B_T)=\int_{\R^n} h_0(\frac{v}{T})\chi_I(F_0(vg))dv=\int_{\R^n} h_0(\frac{v g^{-1}}{T})\chi_I(F_0(v))dv,$$
and we can approximate it from above and below by
$\int_{\R^n} h_\delta^\pm (\frac{v g^{-1}}{T})\chi_I(F_0(v))dv$. 

Let $h_{\delta,g}^\pm(v)=h_{\delta}^\pm(v g^{-1})$, so that $h_{\delta,g}$ is supported on $B_{2\|g\|}$ and satisfy
$\cS_1( h_{\delta,g}^\pm)=O(\|g\|\delta^{-1})$. Now applying Lemma \ref{l:smoothvol} to these functions gives us that for any $T>1$
\begin{equation}\label{e:smoothed}
\int_{\R^n} h_{\delta,g}^\pm (\frac{v}{T})\chi_I(F_0(v))dv=J(h_{\delta,g}^\pm)|I| T^{n-d}+
\left\{\begin{array}{ll}
O\left(\|g\|^{n-2d+2} \delta^{-1}|I|  T^{n-2d}\right)  & n\geq 2d+1\\
O\left(  |I| \log(2\|g\|T)\right)+O\left(\|g\|^2\delta^{-1}|I|| \right) & n=2d\\
O\left(  |I| \right)+O\left( \|g\| \delta^{-1} |I| \frac{\log(2\|g\|T)}{T}\right)   & n=2d-1\\
O\left(  |I| \right)+ O\left(\frac{\|g\| |I| }{\delta T}\right)& n<2d-1.
 \end{array}\right.
\end{equation}

Next, let $h_{0,g}(v)=h_0(vg^{-1})$ and note that $J(h_{\delta,g}^-)\leq J(h_{0,g})\leq J(h_{\delta,g}^+)$ and that 
$$J(h_{\delta,g}^+)-J(h_{\delta,g}^-)=\int_{S^{p,q}} \int_0^\infty \left(h_\delta^+(r(\omega_1+\omega_2)g^{-1})-h_\delta^-(r(\omega_1+\omega_2)g^{-1})\right)r^{n-d-1}drd\omega_1d\omega_2.$$
Since $h_\delta^+(r(\omega_1+\omega_2)g^{-1})-h_\delta^-(r(\omega_1+\omega_2)g^{-1})=0$ unless $\|r(\omega_1+\omega_2)g^{-1}\|\in (1-\delta,1+\delta)$, we can bound 
\begin{align*} 
|J(h_{\delta,g}^\pm)-J(h_{0,g})|&\leq \int_{S^{p,q}}\int_{\frac{1-\delta}{\|(\omega_1+\omega_2)g^{-1}\|}}^{\frac{1+\delta}{\|(\omega_1+\omega_2)g^{-1}\|}}r^{n-d-1}dr d\omega_1d\omega_2\\
&\ll \delta  \int_{S^{p,q}}\|(\omega_1+\omega_2)g^{-1}\|^{d-n}d\omega_1d\omega_2\ll \|g\|^{n-d}\delta,
\end{align*}
where in the last step we used that $\|(\omega_1+\omega_2)g^{-1}\|\gg \|g\|^{-1}$ for any $(\omega_1,\omega_2)\in S^{p,q}$.

Plugging this estimate back in \eqref{e:smoothed} and taking 
$$\delta=\left\{\begin{array}{ll} T^{-d/2} & n\geq 2d\\
T^{\frac{d-n-1}{2}} & d<n\leq 2d-1\end{array}\right. $$ 
we get that for any $T>2$, $\vol(F^{-1}(I)\cap B_T)$ is bounded both from above and from below by
$$J(h_{0,g})|I|T^{n-d}+\left\lbrace \begin{array}{ll} 
O_g(|I| |T^{n-3d/2})& n\geq 2d\\ 
O_g(|I| T^{\frac{n-d-1}{2}}\log(T) )& n=2d-1\\
O_g(|I| T^{\frac{n-d-1}{2}} )& n<2d-1,\\
\end{array}\right.
$$
where for the $n=2d-1$ case we used the estimate $\log(2\|g\|T)=O_g(\log(T))$ for all $T>2$. Setting $c_F=J(h_{0,g})$ concludes the proof for the case of $I\subseteq [-1,1]$, where we note that the implied constant is bounded by some power of $\|g\|$ and is hence uniform for $g$ in compact sets.

Finally, we consider the general case of an interval $I\subseteq [-N,N]$. Denote by $I'=\frac{1}{N}I\subseteq [-1,1]$ and note that for any $v\in \R^n$ we have that $F(v)\in I$ if and only if $F( N^{-1/d} v)\in I'$, so that writing $v=N^{1/d}u$ we have that 
$$F^{-1}(I)\cap B_T=\{v\in \R^n: F(v)\in I,\; \|v\|\leq T\}=\{N^{1/d} u: F(u)\in I',\; \|u\|\leq TN^{-1/d}\}.$$
Since we assume $T> 2N^{1/d}$ we can apply the previous result to get that indeed
\begin{align*}
\vol(F^{-1}(I)\cap B_T)&=N^{n/d}\vol(\{u\in \R^n: F(u)\in I',\; \|u\|\leq TN^{-1/d}\})\\
&=c_F |I|T^{n-d}+\left\lbrace \begin{array}{ll} 
O_g(|I| |T^{n-3d/2}  N^{1/2})& n\geq 2d\\ 
O_g(|I|  T^{\frac{n-d-1}{2}}N^{\frac{n+1-d}{2d}}\log(T) )& n=2d-1\\
O_g(|I| T^{\frac{n-d-1}{2}} N^{\frac{n+1-d}{2d}})& n<2d-1,\\
\end{array}\right.
\end{align*}
where for the $n=2d-1$ case we used that $\log(TN^{-1/d})\leq \log(T)$ for $N\geq 1$. Finally, we conclude the proof by noting that for $T>2N^{1/d}$, $T^{n-3d/2}N^{1/2}\ll T^{n-d-1}N^{1/d}$ and $T^{\frac{n-d-1}{2}}N^{\frac{n+1-d}{2d}}\ll T^{n-d-1}N^{1/d}$.
\end{proof}

\section{Approximating a single point}\label{sec:single}
In this section, we prove Theorem \ref{thm:singleapprox} by reducing it into a lattice point counting problem, and more precisely to a discrepancy estimate.

As mentioned in the introduction, for $F=g\cdot F_0$ with $g\in G$ and for any measurable subset $I\subset \R$ we have that the counting function
\begin{equation}\label{equ:transtolattice}
\cN_F(I,t):=\#\left(\Z^n\cap F^{-1}(I)\cap B_t\right)=\#\left(\Z^ng\cap F_0^{-1}(I)\cap B_tg\right)
\end{equation}
counts the number of lattice points of $\Lambda=\Z^ng$ that lie inside the set $F_0^{-1}(I)\cap B_tg$. To simplify notation, for any $g\in G$, and subset $I\subset \R$ and any $t>0$ we denote by \begin{equation}\label{e:Ag}
A_{g,I,t}:=F_0^{-1}(I)\cap B_tg.
\end{equation}
In view of this relation, to get a power saving asymptotic formula for the counting function $\cN_F(I,t)$, we first prove a power saving asymptotic bound for the discrepancy $D(\Z^ng, A_{g,I,t})$.
\begin{thm}\label{thm:latticecounting}
Keep the assumptions as in Theorem \ref{thm:singleapprox}. Then there exists some $\delta\in (0,1)$ such that for $\mu$-a.e. $g\in G$ there exists $t_g>0$ such that for all $t\geq t_g$
$$D(\Z^ng,A_{g,I_t,t})<\vol(A_{g,I_t,t})^{\delta}.$$
\end{thm}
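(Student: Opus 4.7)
The plan is to combine Rogers' second moment bound from Lemma~\ref{l:MAT} with a discretization, both of the group element $g$ (via Lemma~\ref{prop:covering}) and of the scale $t$, in order to reduce the statement to a finite collection of sets at each scale and close via a Borel--Cantelli argument. Throughout, fix a compact set $\cK\subset G$ and let $V_t:=\vol(A_{g,I_t,t})$; by Theorem~\ref{t:vol} we have $V_t\asymp t^{n-d-\kappa}$ uniformly for $g\in\cK$, and in particular $V_t\to\infty$ with $t$ since $\kappa<n-d$. Since a $\mu$-a.e. statement may be verified on an exhaustion by compact sets, it is enough to treat a fixed $\cK$.

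Introduce a geometric sequence $\{t_j\}$ via $t_{j+1}=(1+\rho_j)t_j$ with $\rho_j=t_j^{-\alpha}$ for a parameter $\alpha>0$ to be optimized, so that $t_j$ grows polynomially in $j$. At each scale $j$, Lemma~\ref{prop:covering} provides a finite set $\cI_j\subset \cK$ of size $O(\rho_j^{-d_n})$ with $\cK\subseteq\bigcup_{h\in\cI_j}\cO_{\rho_j}h$. For $g\in\cO_{\rho_j}h$ and $t\in[t_j,t_{j+1}]$, the inclusions \eqref{equ:covering} together with the monotonicity $I_{t_{j+1}}\subseteq I_t\subseteq I_{t_j}$ give the sandwich
\[
A^{-}_{j,h}:=A_{h,I_{t_{j+1}},(1-\rho_j)t_j}\ \subseteq\ A_{g,I_t,t}\ \subseteq\ A_{h,I_{t_j},(1+\rho_j)t_{j+1}}=:A^{+}_{j,h},
\]
and Theorem~\ref{t:vol} yields $\vol(A^{+}_{j,h}\setminus A^{-}_{j,h})=O(\rho_j V_{t_j})$, the interpolation error.

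By Lemma~\ref{l:inter} the sandwich translates into
\[
D(\Z^n g,A_{g,I_t,t})\leq \max\{D(\Z^n g,A^{-}_{j,h}),D(\Z^n g,A^{+}_{j,h})\}+O(\rho_j V_{t_j}).
\]
Applying Lemma~\ref{l:MAT} with the compact set $\cO_{\rho_j}h$ and threshold $T_j:=V_{t_j}^\delta$ for some $\delta\in(0,1)$ gives, for either of $A^{\pm}_{j,h}$,
\[
\mu\bigl(\{g\in\cO_{\rho_j}h:D(\Z^ng,A^{\pm}_{j,h})\geq T_j\}\bigr)\ll V_{t_j}^{1-2\delta},
\]
and summing over $h\in\cI_j$ the total bad measure at scale $j$ is $O(\rho_j^{-d_n}V_{t_j}^{1-2\delta})$. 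For the interpolation error $\rho_j V_{t_j}$ to be absorbed into $T_j$ one requires $\rho_j\ll V_{t_j}^{\delta-1}$, which pins down $\alpha=(1-\delta)(n-d-\kappa)$.

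With this choice, Borel--Cantelli applies provided the bad measures are summable over $j$; since $t_j\asymp j^{1/\alpha}$, this reduces to the inequality $\alpha(d_n+1)<(n-d-\kappa)(2\delta-1)$, which is satisfied for any $\delta\in(0,1)$ sufficiently close to $1$, and balancing the two constraints recovers the explicit exponent $\nu$ quoted in the remark after Theorem~\ref{thm:singleapprox}. The main obstacle is exactly the non-monotonicity of the family $\{A_{g,I_t,t}\}_{t>0}$ (the ball grows while the target interval shrinks), which precludes a direct appeal to Schmidt's sharper estimate for monotone families; the sandwich via Lemma~\ref{l:inter} is the crucial workaround, at the cost of simultaneously optimizing two competing discretization scales.
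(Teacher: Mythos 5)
Your proposal is correct and takes essentially the same approach as the paper: a Borel--Cantelli argument over a polynomially growing sequence of scales, a covering of the fixed compact set by norm balls (Lemma~\ref{prop:covering}), the sandwich via the interpolation Lemma~\ref{l:inter}, the second-moment bound of Lemma~\ref{l:MAT}, and the volume asymptotics of Theorem~\ref{t:vol}. The only cosmetic difference is that you define the scales by a near-geometric recursion $t_{j+1}=(1+t_j^{-\alpha})t_j$ and tie the covering radius to the scale ratio, whereas the paper takes $t_k=k^{\alpha'}$ with $\epsilon_k=1/k$; after identifying your $\alpha$ with the reciprocal of the paper's $\alpha'$ these give the same sequence up to constants, and both arrive at the same threshold $\delta_0=\tfrac{d_n+2}{d_n+3}$.
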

\begin{proof}
Fix a compact set $\cK\subset G$ and a sequence $\{t_k=k^{\alpha}\}_{k\in\N}$ with the exponent $\alpha>\max\{1,\frac{1}{n-d-\kappa}\}$ depending on $\kappa$ to be determined. Let $\delta_0=1-\frac{1}{\alpha(n-d-\kappa)}$. For any $t>0$ and $\delta\in (\delta_0,1)$ consider the set $\cB_t\subset \cK$ defined by
$$\cB_t=\{g\in \cK\ |\ D(\Z^ng, A_{g,I_t,t}) \geq \vol( A_{g,I_t,t})^{\delta}\}.$$
We will show that $\limsup_{t\to\infty}\cB_t$ is a null set (this will imply that for $\mu$-a.e. $g\in \cK$ we have that $D(\Z^ng, A_{g,I_t,t}) < \vol( A_{g,I_t,t})^{\delta}$ for all sufficiently large $t$ and since this holds for any compact set $\cK$ this will conclude the proof).
 
Now, since the sequence $\{t_k\}_{k\in \N}$ is unbounded,
$$\limsup_{t\to\infty}\cB_t=\bigcap_{T>0}\bigcup_{t\geq T}\cB_t=\bigcap_{m\in \N}\bigcup_{k\geq m}\bigcup_{t_k\leq t< t_{k+1}}\cB_t.$$
and hence
$$\mu\left(\limsup_{t\to\infty}\cB_t\right)=\lim_{m\to\infty}\mu\left(\bigcup_{k\geq m}\bigcup_{t_k\leq t< t_{k+1}}\cB_t\right)\leq \lim_{m\to\infty}\sum_{k=m}^{\infty}\mu\left(\bigcup_{t_k\leq t< t_{k+1}}\cB_t\right).$$
We thus need to estimate $a_k=\mu\left(\bigcup_{t_k\leq t< t_{k+1}}\cB_t\right)$ and show that the series $\sum_k a_k$ is summable. 

Now for $\epsilon_k=\frac{1}{k}$, by Lemma \ref{prop:covering}, for any $k\geq 1$ there exists a finite subset 
$\cI_k:=\cI_{\epsilon_k}\subset \cK$ with $\# \cI_k=O_{\cK}(k^{d_n})$ such that $\cK\subset \bigcup_{h\in \cI_k}\cO_{\epsilon_k}h$ where $d_n=\frac{(n+2)(n-1)}{2}$. 
Thus for $g\in \cK$, there exists some $h\in \cI_k$ and $g' \in \cO_{\epsilon_k}$ such that $g=g' h$. Then by \eqref{equ:covering} for any $t_k\leq t< t_{k+1}$ 
$$B_{(1-\epsilon_k)t_k}h\subset B_{t_k}g' h\subset B_{t}g\subset B_{t_{k+1}}g' h \subset B_{(1+\epsilon_k)t_{k+1}}h,$$
and since $I_{t_{k+1}}\subseteq I_t\subseteq I_{t_k}$ we get that 
$$\underline{A}_{k,h}\subset A_{g,I_t,t}\subset \overline{A}_{k,h},$$ 
where
$$\underline{A}_{k,h}=F_0^{-1}(I_{t_{k+1}})\cap B_{(1-\epsilon_k)t_k}h\quad \textrm{and}\quad \overline{A}_{k,h}=F_0^{-1}(I_{t_k})\cap B_{(1+\epsilon_k)t_{k+1}}h.$$
Hence using the interpolation Lemma \ref{l:inter}, we get that for any  $g\in \bigcup_{t_k\leq t< t_{k+1}}\cB_t$ there is $h\in \cI_k$ such that 
\begin{align*}
\max\left\{D(\Z^ng,\underline{A}_{k,h}),D(\Z^ng,\overline{A}_{k,h})\right\}&\geq D(\Z^ng, A_{g,I_t,t})-\vol((\overline{A}_{k,h}\setminus \underline{A}_{k,h}))\\
&\geq \vol(A_{g,I_t,t})^{\delta}-\vol((\overline{A}_{k,h}\setminus \underline{A}_{k,h}))\\
&\geq \vol(\underline{A}_{k,h})^{\delta}-\vol((\overline{A}_{k,h}\setminus \underline{A}_{k,h}))
\end{align*}
implying that 
\begin{equation}\label{equ:relation}
\bigcup_{t_k\leq t< t_{k+1}}\cB_t\subset \bigcup_{h\in \cI_k}\cC_{k,h},
\end{equation}
where 
$\cC_{k,h}=\cM_{\underline{A}_{k,h},T_{k,h}}\cup \cM_{\overline{A}_{k,h},T_{k,h}}$ with
$T_{k,h}=\vol(\underline{A}_{k,h})^{\delta}-\vol(\overline{A}_{k,h}\setminus \underline{A}_{k,h})$, and $\cM_{A,T}=\cM^{(\cK)}_{A,T}$ defined in \eqref{e:MAT}.
Now applying Lemma  \ref{l:MAT} we can bound 
\begin{equation}\label{equ:estimate1}
\mu(\cC_{k,h})\leq \mu(\cM_{\underline{A}_{k,h},T_{k,h}})+\mu(\cM_{\overline{A}_{k,h},T_{k,h}})\ll_{\cK}\frac{\vol(\overline{A}_{k,h})}{T_{k,h}^2}.
\end{equation}
Since $\{I_t\}_{t>0}$ is bounded, there exists some $N_0>0$ such that $I_t\subset [-N_0,N_0]$ for all $t>0$, and hence for all $k\geq k_0$ sufficiently large
we can apply Theorem \ref{t:vol} to $\overline{A}_{k,h}$ (or more precisely to $\overline{A}_{k,h}h^{-1}$ having the same volume)  which, recalling that $|I_t|=ct^{-\kappa}$, gives
$$\vol(\overline{A}_{k,h})=c_h ct_k^{-\kappa}((1+\epsilon_k)t_{k+1})^{n-d}+O_{\cK,c}\left(t_k^{-\kappa}t_{k+1}^{n-d-1}\log(t_{k+1})\right),$$
with $c_h=c_{h\cdot F_0}$ from Theorem \ref{t:vol}.  Next, plug in $t_k=k^{\alpha}$ and use the estimates $(1+\epsilon_k)^{n-d}=1+O(\frac{1}{k})$ and $t_{k+1}^{n-d}=(k+1)^{\alpha(n-d)}=k^{\alpha(n-d)}(1+O_{\kappa}(\frac{1}{k}))$ to get
$$\vol(\overline{A}_{k,h})=c_hc k^{\alpha(n-d-\kappa)}+O_{\cK,c,\kappa}\left(k^{\alpha(n-d-\kappa)-1}+k^{\alpha(n-d-1-\kappa)}\log(k+1)^{\alpha}\right).$$
Since by assumption $\alpha>1$ we have $\alpha(n-d-1-\kappa)<\alpha(n-d-\kappa)-1$ so that
\begin{equation}\label{equ:upperes}
\vol(\overline{A}_{k,h})=c_hc k^{\alpha(n-d-\kappa)}+O_{\cK,c,\kappa}\left(k^{\alpha(n-d-\kappa)-1}\right).
\end{equation}
Similarly, applying Theorem \ref{t:vol} to $\underline{A}_{k,h}$ for $k>k_0$ sufficiently large we get
\begin{equation}\label{equ:loweres}
\vol(\underline{A}_{k,h})=c_hc k^{\alpha(n-d-\kappa)}+O_{\cK,c,\kappa}\left(k^{\alpha(n-d-\kappa)-1}\right),
\end{equation}
and hence, 
$$\vol(\overline{A}_{k,h}
\setminus \underline{A}_{k,h})\ll_{\cK,c,\kappa} k^{\alpha(n-d-\kappa)-1}.$$
We also have for $k>k_0$ sufficiently large  
$$\vol(\underline{A}_{k,h})^{\delta}\asymp_{\cK,c} k^{\delta \alpha(n-d-\kappa)}.$$
Moreover, since we assume $\delta> 1-\frac{1}{\alpha(n-d-\kappa)}$ then $\delta\alpha(n-d-\kappa)> \alpha(n-d-\kappa)-1$ and hence
$$T_{k,h}=\vol(\underline{A}_{k,h})^{\delta}-\vol(\overline{A}_{k,h}\setminus \underline{A}_{k,h})\asymp_{\cK,c,\kappa} k^{\delta \alpha(n-d-\kappa)}.$$
Combining the above estimates with \eqref{equ:estimate1} we get that 
$$\mu(\cC_{k,h})\ll_{\cK,c,\kappa} \frac{\vol(\overline{A}_{k,h})}{T_{k,h}^2}\asymp_{\cK,c,\kappa} k^{\alpha(n-d-\kappa)(1-2\delta)}.$$
We can then use \eqref{equ:relation} and the fact $\#\cI_k=O_{\cK}(k^{d_n})$ to get for $k>k_0$ sufficiently large
$$a_k\leq \mu(\bigcup_{h\in \cI_k}\cC_{k,h})\leq \sum_{h\in \cI_k}\mu(\cC_{k,h})\ll_{\cK,c,\kappa}  \frac{1}{k^{\alpha(n-d-\kappa)(2\delta-1)-d_n}}.$$
Now, the assumption $\delta> 1-\frac{1}{\alpha(n-d-\kappa)}$ also implies that $\alpha(n-d-\kappa)(2\delta-1)-d_n>\alpha(n-d-\kappa)-2-d_n$, so taking $\alpha=\frac{d_n+3}{n-d-\kappa}>\max\{1,\frac{1}{n-d-\kappa}\}$ we get that $\alpha(n-d-\kappa)(2\delta-1)-d_n>1$ showing that the series $\sum_ka_k$ is summable as needed.
\end{proof}


We can now use this discrepancy bound to conclude.
\begin{proof}[Proof of Theorem \ref{thm:singleapprox}]
Fix $\delta\in (\delta_0,1)$ be as above with $\delta_0=1-\frac{1}{\alpha(n-d-\kappa)}=\frac{d_n+2}{d_n+3}$, and let $0<\nu<(1-\delta)(n-d-\kappa)$. 
Then by Theorem \ref{thm:latticecounting} for $\mu$-a.e. $g\in G$ we have that $D(\Z^n g, A_{g,I_t,t})<\vol(A_{g,I_t,t})^{\delta}$ for all sufficiently large $t$, and hence for $F=g\cdot F_0$ with $g$ as above and all sufficiently large $t$,
\begin{align*}
\left|\cN_F(I_t,t)-c_F|I_t|t^{n-d}\right|&\leq D(\Z^ng,A_{g,I_t,t})+ |\vol(A_{g,I_t,t})-c_F|I_t|t^{n-d}|\\
&< \vol(A_{g,I_t,t})^{\delta}+O_{g,c}(t^{n-d-1-\kappa}\log(t))\\
&<(2c_Ft^{n-d-\kappa})^{\delta}+O_{g,c}(t^{n-d-1-\kappa}\log(t))< t^{n-d-\kappa-\nu}.\qedhere
\end{align*} 
\end{proof}

\section{Uniform approximation}\label{sec:uniform}
We now use similar ideas to give a uniform bound for the discrepancy for all intervals at once. We first prove a preliminary uniform bound for the discrepancy for all intervals of a fixed length.
\begin{thm}\label{thm:simuapprox}
Keep the assumptions as in Theorem \ref{thm:sim2}. Then there exists some $\delta\in (0,1)$ such that for almost all $F\in Y^{(d)}_{p,q}$ there is $t_F>0$ such that for all $t>t_F$ and for all intervals $I\subset [-N(t),N(t)]$ with $|I|=t^{-\kappa}$ we have
\begin{equation}\label{equ:counting2}
|\cN_F(I,t)-c_Ft^{n-d-\kappa}|< t^{\delta(n-d-\kappa)},
\end{equation}
where $c_F$ is as in Theorem \ref{t:vol}.
\end{thm}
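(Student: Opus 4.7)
The plan is to refine the proof of Theorem \ref{thm:latticecounting} by adding a discretization of the family of target intervals. Fix a compact set $\cK\subset G$ and parameters $t_k=k^\alpha$, $\epsilon_k=t_k^{-\gamma}$, $\rho_k=t_k^{-\beta}$ (to be chosen). Using Lemma \ref{prop:covering} cover $\cK$ by $\#\cI_k=O_\cK(t_k^{\gamma d_n})$ translates $\cO_{\epsilon_k}h$, and introduce the finite family of representative intervals
\[ \underline{J}_{k,j}=[(j+1)\rho_k,\;j\rho_k+t_{k+1}^{-\kappa}-\rho_k],\qquad \overline{J}_{k,j}=[j\rho_k-\rho_k,\;j\rho_k+t_k^{-\kappa}+\rho_k] \]
for $j\in\Z$ with $|j\rho_k|\leq N(t_{k+1})+1$; there are $\#\cJ_k=O(N(t_{k+1})/\rho_k)=O(t_k^{\eta+\beta})$ such pairs.

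For $g=g'h$ with $g'\in\cO_{\epsilon_k}$, $h\in\cI_k$, any $t\in[t_k,t_{k+1})$ and any $I=[a,a+t^{-\kappa}]\subset[-N(t),N(t)]$, the choice $j=\lfloor a/\rho_k\rfloor$ yields $\underline{J}_{k,j}\subset I\subset\overline{J}_{k,j}$, and combined with \eqref{equ:covering} produces the sandwich $A^-_{k,h,j}\subset A_{g,I,t}\subset A^+_{k,h,j}$, where $A^-_{k,h,j}=F_0^{-1}(\underline{J}_{k,j})\cap B_{(1-\epsilon_k)t_k}h$ and $A^+_{k,h,j}=F_0^{-1}(\overline{J}_{k,j})\cap B_{(1+\epsilon_k)t_{k+1}}h$. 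Two applications of Theorem \ref{t:vol} (valid for $k$ large since $\eta<d$ ensures $(1-\epsilon_k)t_k>2N(t_{k+1})^{1/d}$) give $\vol(A^\pm_{k,h,j})\asymp_\cK t_k^{n-d-\kappa}$ together with
\[ \vol(A^+_{k,h,j}\setminus A^-_{k,h,j})\ll_\cK \rho_k t_k^{n-d}+t_k^{n-d-\kappa}/k+\epsilon_k t_k^{n-d-\kappa}+t_k^{n-d-\kappa-1+\eta/d}\log t_k. \]

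Choose $\gamma=(1-\delta)(n-d-\kappa)$ and $\beta=(1-\delta)(n-d)+\delta\kappa$ so that, for $\delta<1$ close to $1$, each of the four error terms is $O(t_k^{\delta(n-d-\kappa)})$; then $T_k:=\vol(A^-_{k,h,j})^\delta-\vol(A^+_{k,h,j}\setminus A^-_{k,h,j})\asymp t_k^{\delta(n-d-\kappa)}$. Lemma \ref{l:inter} implies that the bad set
\[ \cB_k:=\{g\in\cK:\exists t\in[t_k,t_{k+1}),\; \exists I\subset[-N(t),N(t)]\text{ with }|I|=t^{-\kappa},\; D(\Z^ng,A_{g,I,t})\geq \vol(A_{g,I,t})^\delta\} \]
is contained in $\bigcup_{h\in\cI_k,\,j}(\cM^{(\cK)}_{A^-_{k,h,j},T_k}\cup\cM^{(\cK)}_{A^+_{k,h,j},T_k})$. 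Lemma \ref{l:MAT} and a union bound then give
\[ \mu(\cB_k)\ll_\cK \#\cI_k\cdot\#\cJ_k\cdot t_k^{(1-2\delta)(n-d-\kappa)}\ll t_k^{E(\delta)}, \]
where $E(\delta)=(1-\delta)(n-d-\kappa)d_n+(1-\delta)(n-d)+\delta\kappa+(1-2\delta)(n-d-\kappa)+\eta$. As $\delta\to 1$, $E(\delta)\to \eta+2\kappa-(n-d)<0$ by the hypothesis $\kappa<(n-d-\eta)/2$, so one can fix $\delta<1$ with $E(\delta)<0$ and then take $\alpha$ large enough for $\sum_k\mu(\cB_k)$ to converge. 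Borel--Cantelli yields $\mu(\limsup_k\cB_k)=0$; combined with the volume estimate of Theorem \ref{t:vol} (whose error $O(t^{n-d-\kappa-1+\eta/d}\log t)$ is absorbed into $t^{\delta(n-d-\kappa)}$) and exhausting $G$ by compact sets, this yields \eqref{equ:counting2}.

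The main obstacle is the simultaneous balancing of $\delta,\alpha,\beta,\gamma$: the sandwich error $\vol(A^+\setminus A^-)$ must be dominated by the threshold $\vol(A^-)^\delta$, while the combined discretization cost $\#\cI_k\cdot\#\cJ_k=t_k^{\gamma d_n+\eta+\beta}$ must be beaten by the second-moment saving $T_k^{-2}=t_k^{-2\delta(n-d-\kappa)}$. The factor of $2$ in the assumption $\kappa<(n-d-\eta)/2$ is precisely what the second-moment scaling forces in the $\delta\to 1$ limit, and the linear dependence on $\eta$ enters through the size of the interval net $\#\cJ_k$.
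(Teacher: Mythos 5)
Your approach matches the paper's in structure and all key ingredients: a polynomial time sequence $t_k=k^\alpha$, a $\rho_k$-net of target centers, Lemma \ref{prop:covering} to cover $\cK$ by $\asymp\epsilon_k^{-d_n}$ translates, the sandwich $A^-\subset A_{g,I,t}\subset A^+$ fed into Lemma \ref{l:inter}, Lemma \ref{l:MAT} plus a union bound, and Borel--Cantelli; and you correctly isolate $\eta+2\kappa-(n-d)<0$ as the driving inequality in the $\delta\to1$ limit. The parametrization is cosmetically different (the paper fixes $\epsilon_k=1/k$, $\rho_k=t_{k+1}^{-\beta}$ with $\beta=\kappa+1/\alpha$, first takes $\alpha>\max\{d/(d-\eta),(d_n+4)/(n-d-2\kappa-\eta)\}$ and then $\delta\in(1-\tfrac{1}{\alpha(n-d-\kappa)},1)$, whereas you let $\delta\to1$ and optimize over $\epsilon_k=t_k^{-\gamma}$, $\rho_k=t_k^{-\beta}$), but they are morally the same computation.

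There is, however, a genuine precision gap in the threshold estimate. With $\gamma=(1-\delta)(n-d-\kappa)$ and $\beta=(1-\delta)(n-d)+\delta\kappa$ \emph{exactly}, the error contributions $\epsilon_k t_k^{n-d-\kappa}$ and $\rho_k t_k^{n-d}$ are each \emph{equal in order} to $t_k^{\delta(n-d-\kappa)}$, i.e.\ they are $O(t_k^{\delta(n-d-\kappa)})$ but not $o(t_k^{\delta(n-d-\kappa)})$. Consequently you cannot conclude $T_k=\vol(A^-)^\delta-\vol(A^+\setminus A^-)\asymp t_k^{\delta(n-d-\kappa)}$; indeed, depending on the implied constants (which involve $c_h$ and the geometry), $T_k$ could be non-positive, and then Lemma \ref{l:MAT} gives nothing. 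You need all four error terms to be strictly $o(\vol(A^-)^\delta)$, which forces $\gamma$ and $\beta$ to be taken strictly larger (by a small margin) than the borderline values you wrote, at a harmless cost to $E(\delta)$. Relatedly, the instruction ``take $\alpha$ large enough'' overlooks that the term $t_k^{n-d-\kappa}/k=t_k^{n-d-\kappa-1/\alpha}$ also needs to be $o(t_k^{\delta(n-d-\kappa)})$, which imposes the upper bound $\alpha<\tfrac{1}{(1-\delta)(n-d-\kappa)}$; so $\alpha$ must be chosen inside a window, not merely large. For $\delta$ close to $1$ that window is non-empty (as the paper's explicit choice shows), so the argument survives, but these adjustments should be made explicit before you may claim $T_k\asymp t_k^{\delta(n-d-\kappa)}$.
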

\begin{proof}
Fix a compact subset $\cK\subset G$, and a sequence $\{t_k=k^{\alpha}\}_{k\in \N}$ with $\alpha>\frac{1}{n-d-\kappa}$ some large number depending on $\kappa$ and $\eta$ to be determined. Take $\delta$ such that $\delta\in(1-\frac{1}{\alpha(n-d-\kappa)},1)$. For any $t>0$ we define $\cB_t\subset \cK$ 
$$\cB_t=\{g\in \cK: \;\exists I\subseteq [-N(t),N(t)],\; |I|=t^{-\kappa},\;  \ D(\Z^ng, A_{g,I,t})\geq \vol(A_{g,I,t})^{\delta}\},$$
where $A_{g,I,t}=F_0^{-1}(I)\cap B_tg$ as before. As in the proof of Theorem \ref{thm:latticecounting} 
it suffices to show that the series $\sum_k a_k$ with $a_k=\mu\left(\bigcup_{t_k\leq t<t_{k+1}}\cB_t\right)$ is summable.
To do that we will bound the set $\bigcup_{t_k\leq t<t_{k+1}}\cB_t$ by a nicer set for which we have good control on the measure.
 
First, for any $t_k\leq t<t_{k+1}$ we reduce the collection of all intervals in  $[-N(t),N(t)]$  into a finite discrete collection of intervals.
Let $\beta=\kappa+\frac{1}{\alpha}$ and let $-N\left(t_{k+1}\right)=\xi_{k,0}<\xi_{k,1}<\cdots<\xi_{k,M(k)}=N\left(t_{k+1}\right)$ be a $t_{k+1}^{-\beta}$-dense partition of the interval $\left[-N(t_{k+1}),N(t_{k+1})\right]$ 
so that
 $M(k)
 \asymp N(t_{k+1})t_{k+1}^{\beta}.$

Now for any interval $I\subset [-N(t),N(t)]$ with $t_k\leq t<t_{k+1}$, since its center point $\xi$ satisfies $\xi\in I\subset [-N(t),N(t)]\subset [-N(t_{k+1}),N(t_{k+1})]$, there exists some $0\leq i<M(k)$ such that $\xi_{k,i}\leq \xi<\xi_{k,i+1}$. Note that for $I=(\xi-\frac{t^{-\kappa}}{2},\xi+\frac{t^{-\kappa}}{2})$, since $t_k\leq t<t_{k+1}$ and $\xi_{k,i}\leq \xi<\xi_{k,i+1}$ we have 
\begin{equation}\label{equ:containment2}
(\xi_{k,i+1}-\frac{t_{k+1}^{-\kappa}}{2},\xi_{k,i}+\frac{t_{k+1}^{-\kappa}}{2})\subset I\subset (\xi_{k,i}-\frac{t_k^{-\kappa}}{2},\xi_{k,i+1}+\frac{t_k^{-\kappa}}{2}).
\end{equation}

Next, let $\epsilon_k=\frac{1}{k}$ and again by Lemma \ref{prop:covering} we know that for any $k\geq 1$, there exists a finite subset $\cI_k:=\cI_{\epsilon_k}\subset \cK$ with $\#\cI_k=O_{\cK}(k^{d_n})$ such that $\cK\subset \bigcup_{h\in \cI_k}\cO_{\epsilon_k}h$. Thus for any $g\in \bigcup_{t_k\leq t<t_{k+1}}\cB_t$, there exists some $g'\in \cO_{\epsilon_k}$ and $h\in \cI_k$ such that $g=g' h$. Then by \eqref{equ:covering} for $t_k\leq t<t_{k+1}$ we have $B_{(1-\epsilon_k)t_k}h\subset B_tg\subset B_{(1+\epsilon_k)t_{k+1}}h$. Combining this with \eqref{equ:containment2} we get $\underline{A}_{k,i,h}\subset A_{g,I,t}\subset \overline{A}_{k,i,h}$ where
$$\underline{A}_{k,i,h}=F_0^{-1}(\xi_{k,i+1}-\frac{t_{k+1}^{-\kappa}}{2},\xi_{k,i}+\frac{t_{k+1}^{-\kappa}}{2})\cap B_{(1-\epsilon_k)t_k}h$$
and 
$$\overline{A}_{k,i,h}=F_0^{-1}(\xi_{k,i}-\frac{t_k^{-\kappa}}{2},\xi_{k,i+1}+\frac{t_k^{-\kappa}}{2})\cap B_{(1+\epsilon_k)t_{k+1}}h.$$
Then by Lemma \ref{l:inter} for any $g\in \bigcup_{t_k\leq t<t_{k+1}}\cB_t$  there exist some $0\leq i< M(k)$ and $h\in \cI_k$ such that
\begin{equation}\label{equ:interpo2}
\max\left\{D(\Z^ng,\underline{A}_{k,i,h}),D(\Z^ng,\overline{A}_{k,i,h})\right\}\geq \vol(\underline{A}_{k,i,h})^{\delta}-\vol(\overline{A}_{k,i,h}\setminus \underline{A}_{k,i,h}).
\end{equation}
Now for any $0\leq i<M(k)$ and $h\in \cI_k$ let 
$\cC_{k,i,h}=\cM^{(\cK)}_{\underline{A}_{k,i,h},T_{k,i,h}}\cup \cM^{(\cK)}_{\overline{A}_{k,i,h},T_{k,i,h}}$ with
$$T_{k,i,h}=\vol(\underline{A}_{k,i,h})^{\delta}-\vol(\overline{A}_{k,i,h}\setminus \underline{A}_{k,i,h}),$$
so that 
\begin{equation}\label{equ:relations}
\bigcup_{t_k\leq t<t_{k+1}}\cB_t\subset \bigcup_{h\in \cI_k}\bigcup_{0\leq i<M(k)}\cC_{k,i,h}.
\end{equation}
By Lemma  \ref{l:MAT} we can bound
\begin{equation}\label{equ:estimate2}
\mu(\cC_{k,i,h})\ll_{\cK}\frac{\vol\left(\overline{A}_{k,i,h}\right)}{T_{k,i,h}^2}.
\end{equation}
Now, we take
\begin{equation}\label{equ:volumern}
\alpha> \max\left\{\frac{d}{d-\eta}, \frac{d_n+4}{n-d-2\kappa-\eta} \right\}.
\end{equation}
We note that since $\kappa\in (0,\frac{n-d-\eta}{2})$, the second value in the above set is positive (and larger than $\frac{1}{n-d-\kappa}$). We can now use \eqref{equ:relations} and \eqref{equ:estimate2} to give estimates for $\mu\left(\bigcup_{t_k\leq t<t_{k+1}}\cB_t\right)$.

First, since $N(t)=O(t^{\eta})$ with $\eta\in [0,\min\{d,n-d\})\subset [0,d)$, for $t_k=k^{\alpha}$ and $\epsilon_k=\frac{1}{k}$, there exists $k_0>0$ such that for any $k>k_0$,
$$(1-\epsilon_k)t_k>2(N(t_{k+1})+1)^{1/d}\quad \textrm{and}\quad (1+\epsilon_k)t_{k+1}>2(N(t_{k+1})+1)^{1/d}.$$
Moreover, note that for each $0\leq i< M(k)$ the intervals $(\xi_{k,i+1}-\frac{t_{k+1}^{-\kappa}}{2},\xi_{k,i}+\frac{t_{k+1}^{-\kappa}}{2})$ and $(\xi_{k,i+1}-\frac{t_{k+1}^{-\kappa}}{2},\xi_{k,i}+\frac{t_{k+1}^{-\kappa}}{2})$ are all contained in $[-N(t_{k+1})-1,N(t_{k+1})+1]$. Hence for any $k>k_0$ we can apply Theorem \ref{t:vol} to $\overline{A}_{k,i,h}$ to get
$$\vol(\overline{A}_{k,i,h})=c_h\left(t_k^{-\kappa}+(\xi_{k,i+1}-\xi_{k,i})\right)(1+\epsilon_k)^{n-d}t_{k+1}^{n-d}+O_{\cK}\left(t_k^{-\kappa}t_{k+1}^{n-d-1}N(t_{k+1})^{1/d}\log(t_{k+1})\right).$$
Now,  using the assumption $N(t)=O(t^{\eta})$ and the estimates $\xi_{k,i+1}-\xi_{k,i}\leq t_{k+1}^{-\beta}$, $t_{k+1}\asymp k^{\alpha}$, $(1+\epsilon_k)^{n-d}=1+O(1/k)$ and $t_{k+1}^{n-d}=(k+1)^{\alpha(n-d)}= k^{\alpha(n-d)}(1+O_{\kappa,\eta}(1/k))$ and the relation $\alpha\beta=\alpha\kappa+1$ we can get
\begin{align*}
\vol(\overline{A}_{k,i,h})
                               &= c_h k^{\alpha(n-d-\kappa)}+O_{\cK,\kappa,\eta}\left( k^{\alpha(n-d-\kappa)-1}+k^{\alpha(n-d-1-\kappa+\frac{\eta}{d})}\log(k)\right).
\end{align*}
Since $\alpha> \frac{d}{d-\eta}$ we have that $\alpha(n-d-1-\kappa+\frac{\eta}{d})<\alpha(n-d-\kappa)-1$,
so that
\begin{equation}\label{equ:as2}
\vol(\overline{A}_{k,i,h})=c_h k^{\alpha(n-d-\kappa)}+ O_{\cK,\kappa,\eta}(k^{\alpha(n-d-\kappa)-1}).
\end{equation}
Similarly, we can apply Theorem \ref{t:vol} to $\underline{A}_{k,i,h}$ to get for any $k>k_0$ sufficiently large
\begin{equation}\label{equ:as1}
\vol(\underline{A}_{k,i,h})=c_h k^{\alpha(n-d-\kappa)}+ O_{\cK,\kappa,\eta}(k^{\alpha(n-d-\kappa)-1}).
\end{equation}

Using these estimates and that $\delta\alpha(n-d-\kappa)>\alpha(n-d-\kappa)-1$ we get that
$$T_{k,i,h}\asymp_{\cK,\kappa,\eta} \vol(\underline{A}_{k,i,h})^{\delta}\asymp_{\cK,\kappa,\eta} k^{\delta \alpha(n-d-\kappa)},$$
so that 
$$\mu(\cC_{k,i,h})\ll_{\cK,\kappa,\eta} \frac{1}{k^{(2\delta-1)\alpha(n-d-\kappa)}}.$$
Now, using \eqref{equ:relations} and the above estimate, and recalling $\#\cI_k=O_{\cK}(k^{d_n})$ and $M(k)\ll k^{\alpha(\eta+\beta)}$, we can estimate for $k>k_0$
\begin{align*}
a_k&\leq \mu(\bigcup_{h\in \cI_k}\bigcup_{0\leq i<M(k)}\cC_{k,i,h})\leq \sum_{h\in \cI_k}\sum_{i=0}^{M(k)-1}\mu(\cC_{k,i,h})\\
&\ll_{\cK,\kappa,\eta}\sum_{h\in \cI_k}\sum_{i=0}^{M(k)-1}\frac{1}{k^{(2\delta-1)\alpha(n-d-\kappa)}}\ll_{\cK}
\frac{1}{k^{(2\delta-1)\alpha(n-d-\kappa)-\alpha(\eta+\kappa)-d_n-1}},
\end{align*}
where we used that $\alpha\beta=\alpha\kappa+1$. As in the proof of Theorem \ref{thm:latticecounting} for the exponent we can estimate
\begin{align*}
(2\delta-1)\alpha(n-d-\kappa)-\alpha(\eta+\kappa)-d_n-1&> \alpha(n-d-\kappa)-2-\alpha(\eta+\kappa)-d_n-1\\
&=\alpha(n-d-2\kappa-\eta)-d_n-3>1,
\end{align*}
where for the last inequality we used that $\alpha>\frac{d_n+4}{n-d-2\kappa-\eta}$. 
%
%
%
%
%
%
Hence the series  $\sum_k a_k$ is summable and this finishes the proof.
\end{proof}

\begin{proof}[Proof of Theorem \ref{thm:sim2}]
Let $\kappa\in (0,\frac{n-d-\eta}{2})$ and take another $\kappa'\in (\kappa, \frac{n-d-\eta}{2})$. By Theorem \ref{thm:simuapprox} there exists a full measure subset $\cE=\cE_{N(t),\kappa'}\subset G$ and some constant $\delta\in (0,1)$ such that for any $F=g\cdot F_0$ with $g\in \cE$, there exists some $t_F'>0$ such that for any $t>t_F'$ and any interval $I'\subset [-N(t), N(t)]$ with $|I'|=t^{-\kappa'}$ we have
\begin{equation}\label{equ:interme}
\left|\cN_F(I',t)-c_F|I'|t^{n-d}\right|< t^{\delta(n-d-\kappa')}=|I'|^{\delta}t^{\delta(n-d)}.
\end{equation}
Let $\nu=\frac12\min\{\kappa'-\kappa, (1-\delta)(n-d-\kappa')\}$ and for any $F$ as above, let $t_F=\max\{t_F', (2+c_F)^{\frac{1}{\nu}}\}$. 
Now, for this $F$ and any $t>t_F$, we first assume that $I\subset [-N(t),N(t)]$ with $|I|\geq t^{-\kappa}$ is of length a multiple of $t^{-\kappa'}$. That is, $M_I:=\frac{|I|}{t^{-\kappa'}}$ is a positive integer, and we have a partition $I=\bigsqcup_{i=1}^MI_i$ with each subinterval $I_i\subset I$ and $|I_i|=t^{-\kappa'}$. Applying \eqref{equ:interme} to each $I_i$ we get 
\begin{align*}
&\left|\cN_F(I, t)-c_F|I|t^{n-d}\right|=\left|\sum_{i=1}^{M_I}\left(\cN_F(I_i,t)-c_F|I_i|t^{n-d}\right)\right|\\
&\leq \sum_{i=1}^{M_I}\left|\cN_F(I_i,t)-c_F|I_i|t^{n-d}\right|<\sum_{i=1}^{M_I}|I_i|^{\delta}t^{\delta(n-d)}\\
&=M_It^{-\delta\kappa'}t^{\delta(n-d)}=\frac{|I|}{t^{-\kappa'}}t^{-\delta\kappa'}t^{\delta(n-d)}=|I|t^{(1-\delta)\kappa'+\delta(n-d)}\leq |I|t^{n-d-2\nu},
\end{align*}
where for the last inequality we used that $2\nu\leq (1-\delta)(n-d-\kappa')$. Now we consider the general case of an interval $I\subset [-N(t),N(t)]$ with $|I|\geq t^{-\kappa}$. There exist intervals $\underline{I}\subset I\subset \overline{I}$ 
such that lengths of $\underline{I}$ and $\overline{I}$ are of multiples of $t^{-\kappa'}$ and $|\overline{I}|-|\underline{I}|=t^{-\kappa'}$. Since $\underline{I}\subset I\subset \overline{I}$ we have $\cN_F(\underline{I},t)\leq \cN_F(I,t)\leq \cN_F(\overline{I},t)$. This implies that for $t>t_F$
\begin{align*}
\left|\cN_F(I,t)-c_F|I|t^{n-d}\right|&\leq \max\left\{\left|\cN_F(\underline{I},t)-c_F|I|t^{n-d}\right|,\left|\cN_F(\overline{I},t)-c_F|I|t^{n-d}\right|\right\}\\
&\leq \max\left\{\left|\cN_F(\underline{I},t)-c_F|\underline{I}|t^{n-d}\right|,\left|\cN_F(\overline{I},t)-c_F|\overline{I}|t^{n-d}\right|\right\}+c_Ft^{n-d-\kappa'},
\end{align*}
where for the second inequality we used triangle inequality and the bound 
$$\max\{|I|-|\underline{I}|,|\overline{I}|-|I|\}\leq |\overline{I}|-|\underline{I}|=t^{-\kappa'}.$$
Now since both $\underline{I}$ and $\overline{I}$ are of lengths multiples of $t^{-\kappa'}$, applying the above estimate for $\left|\cN_F(\underline{I},t)-c_F|\underline{I}|t^{n-d}\right|$ and $\left|\cN_F(\overline{I},t)-c_F|\overline{I}|t^{n-d}\right|$ and noting that $|I|\geq t^{-\kappa}$ we have
\begin{align*}
\left|\cN_F(I,t)-c_F|I|t^{n-d}\right|&\leq |\overline{I}|t^{n-d-2\nu}+c_F|I|t^{n-d-\kappa'+\kappa}\leq 2|I|t^{n-d-2\nu}+c_F|I|t^{n-d-2\nu}\\
&=(2+c_F)|I|t^{n-d-2\nu}< |I|t^{n-d-\nu},
\end{align*}
where for the second inequality we used the estimates $|\overline{I}|<2|I|$ and $2\nu\leq \kappa'-\kappa$, and for the last inequality we used that $t>t_F\geq (2+c_F)^{\frac{1}{\nu}}$. This completes the proof.
\end{proof}

\end{document}